\newfont{\bb}{msbm10 at 12pt}
\newfont{\bbp}{msbm10 at 9pt}
\newtheorem{teorema}{Theorem}
\newtheorem{lema}{Lemma}
\newtheorem{corolario}{Corollary}
\newtheorem{definicion}{Definition}
\newtheorem{nota}{Remark}
\newtheorem{ejemplo}{Example}
\def\n{\hbox{\bb N}}
\def\r{\hbox{\bb R}}
\def\fl{\longrightarrow}
\def\c{\hbox{\bb C}}
\newcommand{\gt}{\widetilde{g}}
\newcommand{\Gt}{\widetilde{G}}
\newcommand{\ep}{\varepsilon}
\def\cC{\mathcal{C}}
\def\cU{\mathcal{U}}
\def\cQ{\mathcal{Q}}
\let\hat=\widehat
\let\tilde=\widetilde
\let\landa=\lambda
\let\alfa=\alpha
\let\parc=\partial
\let\ep=\varepsilon
\def\landa{\lambda}
\def\lap{\Delta}
\def\flecha{\rightarrow}
\def\cte.{\mathop{\rm cte.}\nolimits}
 \def\Im{\mathop{\rm Im }\nolimits}
\def\ln{\mathop{\rm ln }\nolimits}
\def\N{\mathbb{N}}
\def\R{\mathbb{R}}
\def\C{\mathbb{C}}
\def\D{\mathbb{D}}
\def\S{\mathbb{S}}
\newcommand{\beq}{\begin{equation}}
\newcommand{\eeq}{\end{equation}}
\begin{document}

\begin{center}
\rule{15cm}{1.5pt}\vspace{1cm}

{\LARGE \bf The geometric Neumann problem\\[5mm] for the Liouville equation}\\
\vspace{.5cm}

{\bf José A. Gálvez$^a$, Asun Jiménez${}^{a}$ and Pablo
Mira${}^{b}$}\vspace{.5cm}

\rule{15cm}{1.5pt}
\end{center}

\noindent{\small ${}^a$Departamento de Geometr\'\i a y Topolog\'\i
a, Facultad de Ciencias, Universidad de Granada, E-18071 Granada,
Spain;
e-mails: jagalvez@ugr.es, asunjg@ugr.es \\
${}^b$Departamento de Matemática Aplicada y Estadística, Universidad
Politécnica de Cartagena, E-30203 Cartagena (Murcia), Spain; e-mail:
pablo.mira@upct.es}


\begin{abstract}
Let $\Omega$ denote the upper half-plane $\R_+^2$ or the upper
half-disk $D_{\ep}^+\subset \R_+^2$ of center $0$ and radius $\ep$.
In this paper we classify the solutions $v\in
C^2(\overline{\Omega}\setminus\{0\})$ to the Neumann problem
$$ \left\{
\begin{array}{lll}
\Delta v+2 K e^v=0&\qquad\qquad&\text{in }\Omega\subseteq \r^2_+=\{(s,t)\in\r^2:\ t>0\},\\[3mm]
{\displaystyle \frac{\partial v}{\partial t}}=c_1e^{v/2}&&\text{on }\partial\Omega\cap\{s>0\},\\[3mm]
{\displaystyle \frac{\partial v}{\partial t}}=c_2e^{v/2}&&\text{on
}\partial\Omega\cap\{s<0\},
\end{array}
\right.
$$ where $K,c_1,c_2\in \R$, with the finite energy condition
$\int_{\Omega} e^v <\8$. As a result, we classify the conformal
Riemannian metrics of constant curvature and finite area on a
half-plane that have a finite number of boundary singularities, not
assumed a priori to be conical, and constant geodesic curvature
along each boundary arc.

\end{abstract}

\section{Introduction}

The Liouville equation $\lap v + 2 K e^v=0$ has a natural geometric
Neumann problem attached to it, that comes from the following
question:

Let $\Omega\subset \R^2$ be a domain with smooth boundary $\parc
\Omega$. \emph{What are the conformal Riemannian metrics on $\Omega$
having constant curvature $K$, and constant geodesic curvature along
each boundary component of $\parc \Omega$?} Here, we assume that the
metric extends smoothly to the boundary $\parc \Omega$.

An important property of the Liouville equation is that it is
conformally invariant. Thus, it is not very restrictive to consider
only simple symmetric domains $\Omega$, such as disks, half-planes
or annuli. The most studied case is when $\Omega=\R_+^2$. In that
situation, we are led to the Neumann problem

\begin{equation}\label{lio}
\left\{
\begin{array}{lll}
\Delta v+2 K e^v=0&\qquad\qquad&\text{in }\r^2_+=\{(s,t)\in\r^2:\ t>0\},\\[3mm]
{\displaystyle \frac{\partial v}{\partial t}}=c\, e^{v/2}&&\text{on }\partial\r^2_+,\\[3mm]
\end{array}
\right.\end{equation} where the first equation tells that the
conformal metric $e^v |dz|^2$ has constant curvature $K$, and the
free boundary condition gives that $\parc \R_+^2$ has constant
geodesic curvature $-c/2$ for that metric. The above problem was
fully solved by Zhang \cite{Zha} (in the finite energy case) and
Gálvez-Mira \cite{GaMi} (in general), as an extension of previous
results in \cite{LiZh,Ou} (see also \cite{ChLi,ChWa,HaWa}).

Recently, there has been some work on the geometric Neumann problem
for Liouville's equation in $\R_+^2$ in the presence of a boundary
singularity, i.e. the problem

$$
\left\{
\begin{array}{lll}
\Delta v+2 K e^v=0&\qquad\qquad&\text{in }\r^2_+=\{(s,t)\in\r^2:\ t>0\},\\[3mm]
{\displaystyle \frac{\partial v}{\partial t}}=c_1e^{v/2}&&\text{on }\partial\r^2_+\cap\{s>0\},\\[3mm]
{\displaystyle \frac{\partial v}{\partial t}}=c_2e^{v/2}&&\text{on
}\partial\r^2_+\cap\{s<0\},
\end{array}
\right.\qquad\qquad (P)
$$
with $K\in\{-1,0,1\}$ and $c_1,c_2\in\r$. In \cite{JWZ}, Jost, Wang
and Zhou gave a complete classification of the solutions to the
above problem under the following assumptions:

\begin{enumerate}
 \item
The metric $e^v |dz|^2$ has finite area in $\R_+^2$, i.e.

 \begin{equation}\label{finitehalf}
\int_{\R_+^2} e^v <\8.
 \end{equation}
 \item
The boundary $\parc \R_+^2$ has finite length for the metric $e^v
|dz|^2$, i.e.
 \begin{equation}\label{filen}
\int_{\R_-} e^{v/2} + \int_{\R_+} e^{v/2} <\8.
 \end{equation}
 \item
The metric $e^v|dz|^2$ has a \emph{boundary conical singularity} at
the origin, i.e. there exists $\lim_{z\to 0} |z|^{-2\alfa} e^v \neq
0$ for some $\alfa>-1$.
  \item
$K=1$.
\end{enumerate}
With these hypotheses, they showed that any solution to $(P)$
corresponds to the conformal metric associated to the sector of a
sphere of radius one limited by two circles that intersect at
exactly two points, or to the complement of a closed arc of circle
in the sphere, possibly composed with an adequate branched covering
of the Riemann sphere $\overline{\C}$. In particular, they provided
explicit analytic expressions for all these solutions.

Our main objective in this paper is to provide several improvements of the Jost-Wang-Zhou theorem. These are included in
Theorem \ref{t1}, Theorem \ref{c1}, Theorem \ref{mainth1} and
Corollary \ref{cordos}.

In Theorem \ref{mainth1} we will remove the last three hypotheses of
the above list in the Jost-Wang-Zhou result, and prove that any
solution to $(P)$ of finite area is a \emph{canonical solution}.
These canonical solutions have explicit analytic expressions and a
simple geometric interpretation as the conformal factor associated
to basic regions of $2$-dimensional space forms, up to composition
with suitable branched coverings of $\overline{\C}$ if $K=1$ (see
Section 2). For the case $K=1$ we recover the solutions obtained in
\cite{JWZ}, together with some new solutions corresponding to the
case that the boundary singularity at the origin is not conical; we
do not prescribe here any asymptotic behavior at the origin, nor the
finite length condition \eqref{filen}.

In Theorem \ref{t1} we give a general classification of all the
solutions to $(P)$, without any integral finiteness assumptions, in
the spirit of \cite{GaMi}. We show that the class of solutions to
$(P)$ is extremely large, but still it can be described in terms of
entire holomorphic functions satisfying some adequate properties. As
a matter of fact, we give such a result not only in $\R_+^2$ but
also in an arbitrary half-disk $D_{\ep}^+\subset \R_+^2$. That is,
we also give a general classification result for the solutions $v\in
C^2(\overline{D_{\ep}^+}\setminus\{0\})$ to the local problem

$$\left\{
\begin{array}{lll}
\Delta v+2 K e^v=0&\qquad&\text{in }D_{\ep}^+=\{(s,t)\in\r^2:\ s^2+t^2<\ep^2,\ t>0\},\\[3mm]
{\displaystyle \frac{\partial v}{\partial t}}=c_1e^{v/2}&&\text{on }I_{\ep}^+=\{(s,0)\in\r^2:\ 0<s<\ep\},\\[3mm]
{\displaystyle \frac{\partial v}{\partial t}}=c_2e^{v/2}&&\text{on
}I_{\ep}^-=\{(s,0)\in\r^2:\ -\ep<s<0\}.
\end{array}
\right.\qquad (L)
$$

In Theorem \ref{c1} we classify the solutions to the local problem
$(L)$ that satisfy the finite area condition

\begin{equation}\label{localfinit}
 \int_{D_{\ep}^+} e^v <\8,
\end{equation}
and give a general procedure to construct all of them. In
particular, we describe the asymptotic behaviour at the origin of
any solution to $(L)$ that satisfies \eqref{localfinit}. This is a
generalization to the case of boundary singularities of the
well-known results in \cite{Bry,ChWa,Hei,Nit,War} which describe the
asymptotic behaviour of metrics of constant curvature and finite
area in the punctured disk $\D^*$.

In Corollary \ref{cordos} we extend Theorem \ref{mainth1} to the
case of an arbitrary number of boundary singularities. This solves a
problem posed in \cite{JWZ}, under milder hypotheses. The basic
examples of conformal metrics of constant curvature with boundary
singularities and constant geodesic curvature along each boundary
component are the ones determined by circular polygons in
$\overline{\C}$, but there are many others. To obtain this larger
family, we consider \emph{immersed} circular polygons for which we
allow self-intersections, and give a differential-topological
criterion (Alexandrov embeddedness) for them to generate such
metrics, see Definition \ref{circo}. With this, Corollary
\ref{cordos} proves the converse: any conformal metric of finite
area and constant curvature on $\R_+^2$ (or equivalently on the unit
disk $\D$), with finitely many boundary singularities and constant
geodesic curvature along each boundary component, is one of those
\emph{circular polygonal metrics} constructed from
Alexandrov-embedded, possibly self-intersecting, circular polygons.
Analytically, those metrics will not have simple explicit
expressions; yet, one can still give some analytic information about
them. In Corollary \ref{corfor} we will describe for $K=1$ the
moduli space of these metrics, by parametrizing it in terms of their
associated Schwarzian maps, which have simple explicit expressions.

We have organized the paper as follows. In Section 2 we will present
the \emph{canonical solutions}, together with their geometric
interpretation and their basic properties. Section 3 contains some
preliminaries. In Section 4 we will study the local problem $(L)$ at
a boundary singularity, and prove Theorem \ref{t1}. In Section 5 we
will prove Theorem \ref{c1}, which describes all the solutions to
$(L)$ that satisfy the finite energy condition \eqref{localfinit}.
In Section 6 we will prove Theorem \ref{mainth1}, which states that
any finite area solution to $(P)$ is a canonical solution. In
Section 7 we will prove Corollary \ref{cordos} and Corollary
\ref{corfor} on the classification of conformal metrics of constant
curvature with a finite number of boundary singularities.

\section{The canonical solutions}
Our objective in this section is to describe, both analytically and
geometrically, an explicit family of solutions to $(P)$ satisfying
the finite energy condition \eqref{finitehalf}. We will prove in
Theorem \ref{mainth1} that these are actually \emph{all} the finite
energy solutions to $(P)$.

In all that follows, we assume that $K\in\{-1,0,1\}$, without loss
of generality.

\subsection{Analytic description}

\begin{definicion}\label{defican}
A \emph{canonical solution} is a function of one of the following
types:

\begin{enumerate}
  \item
$v_1:\R_+^2\flecha \R$ given by
\begin{equation}\label{eqv1}
v_1=\log\frac{4\lambda^2\gamma^2|z|^{2(\gamma-1)}}{(K\lambda^2+|z^{\gamma}-z_0|^2)^2}
\end{equation}
where $\gamma, \lambda>0$ and $z_0\in\c$ satisfy
$K\lambda^2+|z^{\gamma}-z_0|^2\neq0$ for all
$z\in\overline{\c^+}\equiv\overline{\r^2_+}$.
 \item
$v_2:\R_+^2\flecha \R$ given by
\begin{equation}\label{eqv2}
v_2=\log\frac{4\,\lambda^2}{|z|^2(K\lambda^2+|\log z-z_0|^{2})^2},
\end{equation}
where $\gamma, \lambda>0$ and $z_0\in\c$, satisfy $K\lambda^2+|\log
z-z_0|^2\neq0$ for all $z\in\overline{\c^+}\equiv\overline{\r^2_+}$.
Here, $\log z=\ln|z|+i \arg(z)$, where $\arg(z)\in[0,\pi]$.
\end{enumerate}
\end{definicion}

Let us observe some elementary properties of these \emph{canonical
solutions}, and explain for what choices of the constants $\gamma,
\lambda, z_0$ and $K$ they exist.

The function $v_1$ given by \eqref{eqv1} is well defined in
$\overline{\r^2_+}\backslash\{0\}$ if $K=1$ for all
$\gamma,\lambda>0$ , $z_0\in\c$. However, if $K=0,-1$, $v_1$ is well
defined if and only if $K\lambda^2+|z^{\gamma}-z_0|^2\neq0$. In
other words, if and only if the distance from the point $z_0$ to the
sector $\{z^{\gamma}:\ z\in\overline{\r^2_+}\backslash\{0\}\}$ is
bigger than $-K\lambda^2$. A simple analysis shows that this
happens:
\begin{itemize}
\item for $K=0$ if and only if $z_0=0$, or $z_0\neq 0$ and $\pi\gamma<\theta_0$ with $\theta_0=\arg (z_0)\in[0,2\pi)$.
\item for $K=-1$ if and only if $\lambda\leq|z_0|$, $\pi\gamma<\theta_0-\alpha_0$, and $|{\rm Im}(z)|>\lambda$ when ${\rm Re}(z)>0$. Here, $\theta_0=\arg (z_0)\in[0,2\pi)$ and $\alpha_0\in(0,\pi/2)$ with $\sin\alpha_0=\lambda/|z_0|$.
\end{itemize}

Besides, if $K=1$ the function $v_1$ satisfies the finite area
condition
$$
\int_{\r^2_+}e^{v_1}=\int_{\r^2_+}\frac{4\lambda^2\gamma^2|z|^{2(\gamma-1)}}{(K\lambda^2+|z^{\gamma}-z_0|^2)^2}<\infty
$$
for every $\gamma,\lambda,z_0$.

In the other cases, if it holds $K\lambda^2+|z^{\gamma}-z_0|^2\neq0$
for all $z\in\overline{\c^+}\equiv\overline{\r^2_+}$ (and not just
for all $z\in \overline{\R_+^2}\setminus\{0\}$), then the metric
trivially has finite area. Otherwise, it means that $z_0=0$ if
$K=0$, or $|z_0|=\landa$ when $K=-1$. But in these cases we clearly
have infinite area at the origin.

As a consequence, $v_1$ is a well defined function in
$\overline{\r^2_+}\backslash\{0\}$ with finite area if and only if
$K\lambda^2+|z^{\gamma}-z_0|^2\neq0$ for all
$z\in\overline{\c^+}\equiv\overline{\r^2_+}$, which is the condition
of Definition \ref{defican}. Observe that $\gamma<2$ when $K=0,-1$.

Analogously the function $v_2$ given in \eqref{eqv2} is well defined
in $\overline{\r^2_+}\backslash\{0\}$ and has finite area if and
only if $K\lambda^2+|\log z-z_0|^2\neq0$ for all
$z\in\overline{\c^+}\equiv\overline{\r^2_+}$.

In particular, if $K=1$ the condition $K\lambda^2+|\log
z-z_0|^2\neq0$ for all $z\in\overline{\c^+}$ is satisfied for every
$\lambda,z_0$. However, in the other cases, we need to impose that
the distance from the point $z_0$ to the strip $\{\log z:\
z\in\overline{\r^2_+}\backslash\{0\}\}=\{\zeta\in \C: 0<{\rm Im}
\zeta <\pi\}$ is bigger that $-K\lambda^2$. This condition happens
\begin{itemize}
\item for $K=0$ if and only if ${\rm Im}(z_0)<0$ or ${\rm Im}(z_0)>\pi.$
\item for $K=-1$ if and only if ${\rm Im}(z_0)<-\lambda$ or ${\rm Im}(z_0)>\pi+\lambda.$
\end{itemize}

This analysis together with a simple computation shows that these
canonical solutions are indeed finite area solutions to problem
$(P)$.

\begin{lema}\label{lemacan}
Any canonical solution $v:\overline{\R_+^2}\setminus \{0\}\flecha
\R$ is a solution to the geometric Neumann problem $(P)$ satisfying
$$\int_{\R_+^2} e^v <\8,$$ where the constants $c_1,c_2$ associated
to the problem are given by the following expressions in terms of
$\gamma$, $\lambda$ and $z_0:=r_0 e^{i\theta_0}$:
 \begin{enumerate}
 \item
For $v_1$ as in \eqref{eqv1}, \beq\label{C1}
c_1=2\,\frac{r_0}{\lambda}\,\sin\theta_0,\qquad
c_2=-2\,\frac{r_0}{\lambda}\,\sin(\theta_0-\pi\gamma). \eeq
 \item
For $v_2$ as in \eqref{eqv2}, \beq\label{C2}
 c_1=\frac{2}{\lambda}\,{\rm Im}(z_0),\qquad c_2=\frac{2}{\lambda}\,(\pi-{\rm Im}(z_0)).
\eeq
 \end{enumerate}
\end{lema}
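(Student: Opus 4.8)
The plan is to verify directly by computation that each canonical solution satisfies the Liouville equation in the interior and the two Neumann boundary conditions, reading off the constants $c_1,c_2$ in the process; the finite area claim has already been established in the preceding discussion. I will treat $v_1$ and $v_2$ separately, though the two computations are structurally identical and the case of $v_2$ can be seen as a degenerate limit of $v_1$.

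For the interior equation $\Delta v + 2Ke^v=0$, I would exploit the standard fact that the Liouville equation is conformally invariant and that metrics of the form $e^v|dz|^2$ with constant curvature $K$ arise as pullbacks of the model space-form metric under a meromorphic (or developing) map. Concretely, writing $w=z^{\gamma}$ for $v_1$ (resp. $w=\log z$ for $v_2$), the factor $4\lambda^2/(K\lambda^2+|w-z_0|^2)^2$ is exactly the conformal factor of the constant-curvature metric of the model pulled back by the Mobius-type map $w\mapsto (w-z_0)/\lambda$. Since $w=z^\gamma$ and $w=\log z$ are holomorphic on $\overline{\r^2_+}\setminus\{0\}$, the extra Jacobian factor $|w'(z)|^2=\gamma^2|z|^{2(\gamma-1)}$ (resp. $1/|z|^2$) is precisely what appears in \eqref{eqv1} and \eqref{eqv2}, so the interior equation holds automatically. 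Alternatively, one checks it by a direct application of $\Delta v = 4\,\partial_z\partial_{\bar z} v$ together with $\Delta \log|f|^2=0$ away from the zeros of a holomorphic $f$; this reduces the verification to the elementary identity $\Delta \log(K\lambda^2+|f|^2)^{-2} = 8K|f'|^2/(K\lambda^2+|f|^2)^2$ on the relevant domain.

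For the boundary conditions I would restrict to the two rays $\{s>0,\,t=0\}$ and $\{s<0,\,t=0\}$, on which $z=s$ and $z=-|s|=|s|e^{i\pi}$ respectively, and compute $\partial v/\partial t$ there. Since $t=\Im z$, one has $\partial_t = 2\,\Re(i\,\partial_z) = -2\,\Im(\partial_z)$ acting on real functions, so I would evaluate $\partial_z v_1$ explicitly, take $-2\,\Im$ of it, and compare with $c_i\,e^{v_1/2}$ where $e^{v_1/2}=2\lambda\gamma|z|^{\gamma-1}/(K\lambda^2+|z^\gamma-z_0|^2)$. Substituting $z=s$ (so $z^\gamma=s^\gamma$ is real and $\arg z=0$) should yield $c_1=2(r_0/\lambda)\sin\theta_0$, while substituting $\arg z=\pi$ (so $z^\gamma=|z|^\gamma e^{i\pi\gamma}$) should yield $c_2=-2(r_0/\lambda)\sin(\theta_0-\pi\gamma)$, matching \eqref{C1}. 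The analogous computation for $v_2$, using $\log z=\ln|z|$ on the positive ray and $\log z=\ln|z|+i\pi$ on the negative ray, gives \eqref{C2}; note that $z_0=r_0e^{i\theta_0}$ gives $\Im z_0=r_0\sin\theta_0$, so the two formulas are consistent in the appropriate limit.

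The main obstacle is bookkeeping rather than conceptual: one must be careful that the branch of $z^\gamma$ (and of $\arg z\in[0,\pi]$ for $\log z$) is the one fixed in Definition \ref{defican}, so that the two boundary rays really correspond to the arguments $0$ and $\pi$, and that the sign conventions in $\partial_t = -2\,\Im(\partial_z)$ are tracked consistently through the complex differentiation. Once the argument of $z$ is substituted correctly on each ray and the numerator $\partial_z v$ is simplified using $z_0=r_0e^{i\theta_0}$, the trigonometric identities $\Im(e^{i\theta_0})=\sin\theta_0$ and $\Im(e^{i(\theta_0-\pi\gamma)})=\sin(\theta_0-\pi\gamma)$ deliver the stated constants, completing the proof.
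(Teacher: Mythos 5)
Your proposal is correct and is essentially the paper's own argument: the paper gives no proof beyond the remark that ``this analysis together with a simple computation'' yields the lemma, and your plan---finite area deferred to the preceding parameter discussion, the interior equation via the developing-map form \eqref{rep} composed with $w=z^{\gamma}$ or $w=\log z$, and the constants read off from $\partial_t v=-2\,{\rm Im}(\partial_z v)$ on the rays $\arg z=0$ and $\arg z=\pi$---is exactly that computation, and it does produce \eqref{C1} and \eqref{C2}. Only two harmless slips: the correct developing maps are $g=\lambda/(z^{\gamma}-z_0)$ and $g=\lambda/(\log z-z_0)$ rather than pullback by $w\mapsto(w-z_0)/\lambda$ (the two choices give the same metric when $K=\pm1$ but differ precisely when $K=0$), and the quoted identity should read $\Delta\log\left(K\lambda^2+|f|^2\right)^{-2}=-8K\lambda^2|f'|^2/(K\lambda^2+|f|^2)^2$; neither affects the outcome.
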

\subsection{Geometric description}\label{dosdos}

Let $\cQ^2 (K)$ denote the $2$-dimensional space form of constant
curvature $K\in \{-1,0,1\}$, which will be viewed as
$(\Sigma_K,ds_K^2)$ where
$$\Sigma_K= \left\{\def\arraystretch{1.2}\begin{array}{ccc}
\overline{\C} & \text{ if } K=1,\\ \C & \text{ if } K=0, \\
\D\subset \C & \text{ if } K=-1,\end{array} \right.$$ and $ds_K^2$
is the Riemannian metric on $\Sigma_K$ given by
 \begin{equation}\label{metrik}
 ds_K^2=\frac{4 |d\zeta|^2}{(1+K|\zeta|^2)^2}.
 \end{equation}
So, a regular curve in $\Sigma_K$ has constant curvature if and only
if its image is a piece of a circle in $\bar{\C}$.

\begin{definicion}
Let $\cC_1,\cC_2$ be two different circles in $\overline{\C}$ such
that $\cC_1\cap \cC_2\neq \emptyset$, and let $\cU\subset
\overline{\C}$ be any of the regions in which $\cC_1$ and $\cC_2$
divide $\overline{\C}$. Assume that $\overline{\cU}$ is contained in
$\Sigma_K$. Then $\cU$ is called a \emph{basic domain} of
$\cQ^2(K)$.
\end{definicion}

Let now $\cU\subset \Sigma_K$ be a basic domain equipped with the
metric $ds_K^2$ in \eqref{metrik}. Note that one can conformally
parametrize $\cU$ by a biholomorphism $g:\C^+\flecha \cU$ such that
$g(\8)$ is a point $p\in \cC_1\cap \cC_2$, and in the case that
$\cC_1\cap \cC_2$ is not a single point $g(0)$ is also some $q\in
\cC_1\cap \cC_2$.

It is then clear from this process that the pull-back metric $g^*
(ds_K^2)$ produces a conformal metric of constant curvature $K$ in
$\overline{\C^+}\equiv \R_+^2$, which has constant geodesic
curvature along $\R_-$ and $\R_+$, and a singularity at the origin.
Also, this metric trivially has finite area, so we have a solution
to $(P)$ that satisfies \eqref{finitehalf}.

A similar process can be done if $K=1$, by considering $\cU$ to be
the complement of an arc of a circle in $\overline{\c}$. This would
correspond in some sense to taking $\cC_1=\cC_2$ in the above
process.

Furthermore, if $K=1$ and $\cC_1\cap \cC_2=\{p,q\}$ consists of two
points, we can easily create other finite area solutions to $(P)$,
starting from the basic region $\cU\subset \overline{\C}$. For that,
it suffices to consider a finite-folded branched holomorphic
covering of $\overline{\C}$, with branching points at $p$ and $q$.
If we denote this branched covering by $\Phi$, and consider
$\hat{g}:=\Phi \circ g$, the pullback metric of $ds_K^2$ via
$\hat{g}$ again describes as before a finite area solution to $(P)$.

This construction provides a geometric interpretation of the
canonical solutions. Indeed, we have:

\vspace{0.3cm}

\noindent {\bf Fact:} Let $v\in
C^2(\overline{\R_+^2}\setminus\{0\})$ be a canonical solution. Then
$e^v |dz|^2$ is the pullback metric on $\R_+^2$ of either:
 \begin{enumerate}
   \item[a)]
some basic region $\cU$ in $\cQ^2(K)$, or
 \item[b)]
the complement in $\cQ^2(1)\equiv \overline{\c}$ of a closed arc of
a circle,
 \end{enumerate}
possibly composed with a suitable branched covering of
$\overline{\C}$ in case $K=1$.

\vspace{0.3cm}

We do not give a direct proof of this fact, since it will become
evident from the proof of our main results. See Section
\ref{secgeom}.

\section{Preliminaries}

Let us start by explaining the classical relationship between the
Liouville equation and complex analysis. From now on we will
identify $\R^2$ and $\C$, and write $z=s+it \equiv (s,t)$ for points
in the domain of a solution to the Liouville equation.

 \begin{teorema}\label{repth}
Let $v:\Omega\subset \R^2\equiv \C \flecha \R$ denote a solution to
$\Delta u + 2K e^v=0$ in a simply connected domain $\Omega$. Then
there exists a locally univalent meromorphic function $g$
(holomorphic with $1 + K |g|^2>0 $ if $K\leq 0$) in $\Omega$ such
that
 \begin{equation}\label{rep}
 v = \log \frac{4 |g'|^2}{(1+ K |g|^2)^2}.
 \end{equation}
Conversely, if $g$ is a locally univalent meromorphic function
(holomorphic with $1 + K |g|^2>0 $ if $K\leq 0$) in $\Omega$, then
\eqref{rep} is a solution to $\Delta v + 2K  e^v =0$ in $\Omega$.
 \end{teorema}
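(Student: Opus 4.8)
The plan is to prove the two implications separately, starting with the (routine) converse and then constructing the representing function $g$ for the direct statement. For the converse, suppose $g$ is locally univalent meromorphic (holomorphic with $1+K|g|^2>0$ when $K\le 0$) and set $v=\log 4+\log|g'|^2-2\log(1+K|g|^2)$. Writing $\Delta=4\,\partial_z\partial_{\overline z}$, the term $\log|g'|^2$ is harmonic wherever $g'\neq 0$ (which holds by local univalence), so it drops out, and a direct computation gives $\partial_z\partial_{\overline z}\log(1+K|g|^2)=K|g'|^2/(1+K|g|^2)^2$. Hence $\Delta v=-2Ke^v$, i.e.\ \eqref{rep} solves $\Delta v+2Ke^v=0$. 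Near a pole of $g$ (possible only when $K=1$) one checks that the expression extends smoothly, using the invariance of the spherical conformal factor under $g\mapsto 1/g$.

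For the direct statement, the first step is to extract a holomorphic datum from $v$. Using $v_{z\overline z}=-\tfrac{K}{2}e^v$, I would show that $\phi:=v_{zz}-\tfrac12 v_z^2$ satisfies $\partial_{\overline z}\phi=0$, so $\phi$ is holomorphic on $\Omega$. The key observation is that $\rho:=e^{-v/2}$ then satisfies the \emph{linear} second-order equation $\rho_{zz}+\tfrac12\phi\,\rho=0$, and, by conjugation, $\rho_{\overline z\,\overline z}+\tfrac12\overline{\phi}\,\rho=0$. Let $y_1,y_2$ be a basis of holomorphic solutions of $y''+\tfrac12\phi\,y=0$ on the simply connected domain $\Omega$; their Wronskian $W=y_1y_2'-y_1'y_2$ is a nonzero constant. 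Freezing $\overline z$ in the first equation shows $\rho=A(\overline z)y_1(z)+B(\overline z)y_2(z)$, and the second equation forces $A,B$ to be spanned by $\overline{y_1},\overline{y_2}$; hence $\rho=\sum_{j,k}c_{jk}\,y_j\overline{y_k}$ for a Hermitian matrix $C=(c_{jk})$ (Hermitian because $\rho$ is real and positive).

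It remains to normalize $C$ so as to recover the space-form conformal factor exactly. Here I would combine the Liouville identity $\rho\rho_{z\overline z}-\rho_z\rho_{\overline z}=\tfrac{K}{4}$ (an immediate consequence of $v_{z\overline z}=-\tfrac K2 e^v$) with the Plücker-type identity $\rho\rho_{z\overline z}-\rho_z\rho_{\overline z}=\det(C)\,|W|^2$, obtaining $\det C=K/(4|W|^2)$. Together with $\rho>0$, this fixes the signature of $C$: positive definite if $K=1$, rank-one positive semidefinite if $K=0$, and signature $(1,1)$ if $K=-1$. Diagonalizing $C$ by a congruence amounts to passing to a new basis $(u_1,u_2)$ of the solution space, i.e.\ replacing $y_1/y_2$ by a Möbius image $g:=u_1/u_2$; after an additional (non-unitary) rescaling of $u_1,u_2$ one arranges $\rho=\tfrac{1}{2|W_u|}\bigl(K|u_1|^2+|u_2|^2\bigr)$, where $W_u$ is the Wronskian of $u_1,u_2$. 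Then $g'=\mp W_u/u_2^2\neq 0$, so $g$ is locally univalent meromorphic, $1+K|g|^2>0$ when $K\le 0$, and substituting back yields $4|g'|^2/(1+K|g|^2)^2=\rho^{-2}=e^{v}$, which is \eqref{rep}.

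The main obstacle is exactly this last matching step. The holomorphic datum $\phi$ determines $g$ only up to a Möbius transformation (the full $\mathrm{PSL}(2,\C)$, six real parameters), whereas the metric $e^v|dz|^2$ should determine $g$ only up to the three-dimensional isometry group of $\cQ^2(K)$; the two ambiguities do not match, so recovering $g$ from $\phi$ alone is not enough. What resolves this is the Hermitian representation $\rho=\sum_{j,k}c_{jk}y_j\overline{y_k}$ derived from the two ODEs, since it uses the full function $v$ and not merely $\phi$, and the determinant identity $\det C=K/(4|W|^2)$ then pins down both the correct signature (ensuring the right target space form) and, after rescaling, the correct scale. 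Getting that final scale exactly right via a non-unitary change of basis of $(u_1,u_2)$ is the only delicate point in the argument.
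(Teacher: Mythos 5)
Your proof is correct, but there is nothing in the paper to compare it against line by line: the paper states Theorem \ref{repth} as the classical Liouville representation (the reference [Li] in the bibliography) and gives no proof at all. Your argument is a complete and standard one, and all the key computations check out: the converse identity $\partial_z\partial_{\overline{z}}\log(1+K|g|^2)=K|g'|^2/(1+K|g|^2)^2$; the holomorphy of $\phi=v_{zz}-\tfrac12 v_z^2$ (which is precisely the Schwarzian map $Q$ of \eqref{schw}); the linearization $\rho_{zz}+\tfrac12\phi\rho=0$ for $\rho=e^{-v/2}$; and the two determinant identities $\rho\rho_{z\overline{z}}-\rho_z\rho_{\overline{z}}=K/4=\det C\,|W|^2$. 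Two points deserve slight polishing. First, ``freezing $\overline{z}$'' is informal; the clean version is to set $A:=(\rho\,y_2'-\rho_z y_2)/W$ and $B:=(y_1\rho_z-y_1'\rho)/W$, verify $\partial_z A=\partial_z B=0$ directly from $\rho_{zz}=-\tfrac12\phi\rho$, so that $\rho=Ay_1+By_2$ with $A,B$ antiholomorphic, and then feed this into the conjugate equation, using that $y_1,y_2$ (holomorphic, nonvanishing Wronskian) are linearly independent over antiholomorphic coefficients. Second, the final rescaling that you flag as the delicate point is in fact automatic when $K=\pm1$: Sylvester's normalization $C=N^*\,\mathrm{diag}(\pm1,1)\,N$ together with $\det C=K/(4|W|^2)$ forces $|\det N|=1/(2|W|)$, hence $|W_u|=|\det N|\,|W|=1/2$ for the new basis $u=Ny$, which is exactly the normalization $\rho=\tfrac{1}{2|W_u|}\bigl(K|u_1|^2+|u_2|^2\bigr)$ you need; for $K=0$ one only rescales the complementary solution $u_1$, which does not affect $\rho$. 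It is also worth noting that your method dovetails with the paper's later arguments: the fundamental system of $y''+\tfrac12 Qy=0$ and the developing map $g$ as a quotient of its solutions is exactly the machinery the paper invokes in the proof of Corollary \ref{corfor}, and your observation about the mismatch between the M\"obius ambiguity of $\{g,z\}=Q$ and the smaller ambiguity \eqref{isom} of the developing map is the same phenomenon that makes the family of solutions there generically three-dimensional.
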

Up to a dilation, we will assume from now on that $K\in \{-1,0,1\}$.
Also, observe that the function $g$ in the above theorem, which is
called the \emph{developing map} of the solution, is unique up to a
Möbius transformation of the form
 \begin{equation}\label{isom}
  g\mapsto \frac{ \alfa g - \bar{\beta}}{ K \beta g + \bar{\alfa}}, \hspace{1cm} |\alfa|^2 +K |\beta|^2 =
  1.
 \end{equation}

\begin{nota}\label{remiso}
The developing map $g$ has a natural geometric interpretation: if
$v\in C^2 (\Omega)$ is a solution to $\lap v + 2K e^v=0$, then its
developing map $g:\Omega\subseteq \C\flecha \Sigma\subseteq
\bar{\C}$ provides a local isometry from $(\Omega, e^v|dz|^2)$ to
$\cQ^2 (K)\equiv (\Sigma_K,ds_K^2)$, where $ds_K^2$ is given by
\eqref{metrik}.
\end{nota}

There is another relevant holomorphic function attached to any
solution $v$ of the Liouville equation. We will denote it by $Q$,
and it is given by the formulas below, where $g$ is the developing
map of $v$:
\begin{equation}\label{schw}
 Q:=v_{zz} - \frac{1}{2} \, v_z^2 = \{g,z\} := \left(\frac{g_{zz}}{g_z}\right)_{\!\!z} -
 \frac{1}{2}\left(\frac{g_{zz}}{g_z}\right)^2.
 \end{equation}Here, by definition $v_z=(v_s-iv_t)/2$ (and $g_z=g'$), and $\{g,z\}$ is
the classical \emph{Schwarzian  derivative} of the meromorphic
function $g$ with respect to $z$. Observe that $Q$ is holomorphic,
i.e. it does not have poles, and it does not depend on the choice of
the developing map $g$. We will call it the \emph{Schwarzian map}
associated to the solution $v$.

The following lemma gives some basic local properties of a solution
to the geometric Neumann problem for the Liouville equation along
the boundary. It is a consequence of some arguments in \cite{GaMi},
but we give a brief proof here for the convenience of the reader.

\begin{lema}\label{mainlem}
Let $D_{\varepsilon}^+ =\{z\in \C: |z|<\varepsilon, {\rm Im} z
>0\}$, and let $v\in C^2 (\overline{D_{\varepsilon}^+})$ be a
solution to $$\left\{\def\arraystretch{1.4}\begin{array}{lll} \lap v
+ 2 K v =0 & in & D_{\varepsilon}^+ \\ \displaystyle \frac{\parc
v}{\parc t} = c e^{v/2} & on & I_{\varepsilon} =(-\ep,\ep)\subset
\R\end{array} \right.$$ Then:
 \begin{enumerate}
   \item[(i)]
The Schwarzian derivative map $Q$ of $v$, defined by \eqref{schw},
takes real values along $I_{\ep}$, and extends holomorphically to
the whole disk $D_{\ep}$ by $Q(\bar{z})=\overline{Q(z)}.$
 \item[(ii)]
The developing map $g$ of $v$ can be extended to $D_{\ep}$ as a
locally univalent meromorphic function.
 \item[(iii)]
$g(s,0):I_{\ep}\flecha \overline{\C}$ is a regular parametrization
of a piece of a circle $\cC$ in $\overline{\C}$.
 \end{enumerate}
\end{lema}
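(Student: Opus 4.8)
The plan is to exploit the interplay between the developing map $g$, the Schwarzian $Q$, and the boundary condition, treating the three conclusions in the order (i), then (ii), then (iii), since each builds on the previous one. First I would establish the geometric meaning of the boundary condition. The Neumann condition $v_t = c\,e^{v/2}$ on $I_\ep$ says that the boundary curve $s\mapsto g(s,0)$ has constant geodesic curvature $-c/2$ in $\cQ^2(K)$, hence by Remark \ref{remiso} its image lies along a circle in $\overline{\C}$. This observation is what ultimately forces (iii), but to make it rigorous I need the extension results (i) and (ii) first.

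\textbf{Step 1 (proof of (i)).} I would compute $Q = v_{zz} - \tfrac12 v_z^2$ along $I_\ep$ and show it is real there. The key is to differentiate the boundary relation $v_t = c\,e^{v/2}$ tangentially along $t=0$ and combine it with the interior equation $\Delta v + 2Ke^v = 0$ (equivalently $v_{ss}+v_{tt}+2Ke^v=0$) to express the normal derivatives of $v$ in terms of tangential ones on the boundary. Writing $Q$ in real coordinates as $Q = \tfrac14\bigl((v_{ss}-v_{tt}) - 2i v_{st}\bigr) - \tfrac18\bigl((v_s - i v_t)^2\bigr)$, one substitutes the boundary identities to check that the imaginary part of $Q$ vanishes on $I_\ep$. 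Once $Q$ is real on the real axis, since $Q$ is holomorphic on $D_\ep^+$ and continuous up to $I_\ep$, the Schwarz reflection principle gives the holomorphic extension to all of $D_\ep$ via $Q(\bar z) = \overline{Q(z)}$. This is the technical heart of the argument and the step I expect to be the main obstacle: the boundary computation must carefully track how $c\,e^{v/2}$ differentiates, and verifying $\Im Q = 0$ on $I_\ep$ is where a sign slip or a miscounted factor would derail everything.

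\textbf{Step 2 (proof of (ii)).} With $Q$ now holomorphic on the full disk $D_\ep$, I would recover $g$ on $D_\ep^+$ as a solution of the Schwarzian equation $\{g,z\} = Q$, whose general solution is $g = w_1/w_2$ for a basis $\{w_1,w_2\}$ of the second-order linear ODE $w'' + \tfrac12 Q\,w = 0$. Because $Q$ extends holomorphically across $I_\ep$, the coefficients of this ODE are holomorphic on all of $D_\ep$, so its solutions extend holomorphically to $D_\ep$; consequently $g$ itself extends as a meromorphic function there, and it is locally univalent since $g' = -W/w_2^2 \neq 0$ for the (constant, nonzero) Wronskian $W$. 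I would note that the extended $g$ need not be given by reflection of the original $g$, but simply by continuing the ODE solutions across the real axis.

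\textbf{Step 3 (proof of (iii)).} Finally, I would translate the boundary condition into the statement about $g(s,0)$. Pulling the metric $ds_K^2$ back by $g$ recovers $e^v|dz|^2$, and since $g$ is a local isometry onto $\cQ^2(K)$ (Remark \ref{remiso}), the Neumann condition $v_t = c\,e^{v/2}$ is precisely the condition that the image curve $g(I_\ep)$ has constant geodesic curvature in $(\Sigma_K, ds_K^2)$. As remarked after \eqref{metrik}, a regular curve in $\Sigma_K$ has constant geodesic curvature if and only if its image is (a piece of) a circle in $\overline{\C}$; regularity of the parametrization follows from $g' \neq 0$ on $I_\ep$. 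Thus $g(s,0)$ regularly parametrizes an arc of a circle $\cC$, completing the proof.
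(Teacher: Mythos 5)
Your proposal is correct and follows essentially the same route as the paper: for (i), reality of $Q$ on $I_{\ep}$ via tangential differentiation of the Neumann condition followed by Schwarz reflection; for (ii), existence/uniqueness up to M\"obius transformations of solutions to $\{g,z\}=Q$ on the simply connected disk $D_{\ep}$ (your linear ODE $w''+\tfrac12 Qw=0$ is just the standard way of proving that fact); and for (iii), the local-isometry interpretation of $g$ plus the fact that constant geodesic curvature curves in $\cQ^2(K)$ are circular arcs. The only superfluous element is your appeal to the interior equation in Step 1: since $\Im Q=-\tfrac12 v_{st}+\tfrac14 v_s v_t$, only the $s$-derivative of the boundary relation $v_t=c\,e^{v/2}$ is needed, and the cancellation is the paper's one-line computation.
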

\begin{proof}
By the Neumann condition $v_t=c e^{v/2}$ along $I_{\ep}$, we have
 \begin{equation*}
 {\rm Im}\,  Q
(s,0)= -\frac{1}{2} \left( \frac{c}{2} v'(s) e^{v(s)/2} -
\frac{c}{2} v'(s) e^{v(s)/2}\right)=0,
 \end{equation*}
for every $s\in I_{\ep}$. Thus, $(i)$ holds immediately by
Schwarzian reflection.

For $(ii)$, we only need to recall that if $q(z)$ is a holomorphic
function in a simply connected domain, then the equation $\{g,z\} =
q (z)$ always has a locally univalent meromorphic solution $g$,
which is unique up to linear fractional transformations. In our
case, we have $\{g,z\} =Q$ on $D_{\ep}^+$, and so $(ii)$ follows
from $(i)$.

Finally, $(iii)$ is clear from the fact that the developing map $g$
defines a local isometry from $(\overline{D_{\ep}^+},e^v|dz|^2)$
into $\cQ^2(K)$, and $I_{\ep}$ has constant curvature $-c/2$ for the
metric $e^v |dz|^2$, by the Neumann condition $v_t=ce^{v/2}$.
\end{proof}

For the proof of Theorem \ref{t1}, we will also need the following
elementary lemma.

\begin{lema}\label{l1}

Let $\widetilde{\Omega}=\{w\in\c:\ a<Re(w)<b\}$, with $-\infty\leq
a<b\leq+\infty$, and let $h:\widetilde{\Omega}\fl\overline{\c}$ be a
function such that $h(w+2\pi i)=h(w)$. Then, there exists a well
defined function $f:\Omega\fl\c$ on the topological annulus
$\Omega=\{z\in\c:\ a<\log|z|<b\}$ such that $h(w)=f(e^w)$ for all
$w\in\widetilde{\Omega}$.

Moreover, if $h$ is a meromorphic function then so it is $f$.
\end{lema}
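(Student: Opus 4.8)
The plan is to realize the exponential map $\exp(w)=e^w$ as the universal covering of the annulus $\Omega$, and to recognize the periodicity hypothesis as invariance under its deck group, so that $h$ simply \emph{descends} to a well defined function $f$ on the quotient. First I would record the elementary facts about this covering. For $w=u+iv$ one has $|e^w|=e^u$, hence $\log|e^w|={\rm Re}(w)$; therefore $e^w\in\Omega$ if and only if $a<{\rm Re}(w)<b$, i.e. $w\in\widetilde{\Omega}$, so $\exp$ maps $\widetilde{\Omega}$ into $\Omega$. It is surjective onto $\Omega$: given $z\in\Omega$ (note $z\neq0$) write $z=re^{i\phi}$ with $r=|z|\in(e^a,e^b)$, and set $w=\log r+i\phi$; then $w\in\widetilde{\Omega}$ and $e^w=z$. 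Finally, two preimages $w_1,w_2$ of the same point satisfy $e^{w_1-w_2}=1$, so $w_1-w_2=2\pi i\,n$ for some $n\in\z$.

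With this at hand I would define $f$ by descent: for $z\in\Omega$ choose any $w\in\widetilde{\Omega}$ with $e^w=z$ and set $f(z):=h(w)$. This is well defined, since if $e^{w_1}=e^{w_2}=z$ then $w_2=w_1+2\pi i\,n$, and iterating the hypothesis $h(w+2\pi i)=h(w)$ gives $h(w_2)=h(w_1)$. By construction $f(e^w)=h(w)$ for every $w\in\widetilde{\Omega}$, which is exactly the asserted identity, and this settles the first part of the lemma, where no regularity of $f$ is required.

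For the \emph{moreover}, I would use that $\exp$ is a local biholomorphism, as $(e^w)'=e^w$ never vanishes. Thus every $z_0\in\Omega$ has a neighborhood $U$ on which a single-valued holomorphic branch $\log\colon U\fl\widetilde{\Omega}$ is defined, and on $U$ one has $f=h\circ\log$. Since $h$ is meromorphic and $\log$ is holomorphic, this local expression is a meromorphic map into $\overline{\c}$ (poles of $h$ being carried to poles of $f$), and as meromorphicity is a local property, $f$ is meromorphic on all of $\Omega$. This argument is entirely routine and I expect no serious obstacle; the only points requiring a little care are the well-definedness of $f$, guaranteed by the $2\pi i$-periodicity together with the fact that the fibers of $\exp$ are precisely the $2\pi i\,\z$-orbits, and the fact that the local branches of $\log$ patch consistently, which is automatic since any two of them differ by an integer multiple of $2\pi i$, under which $h$ is invariant.
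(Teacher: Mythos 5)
Your proof is correct: realizing $\exp\colon\widetilde{\Omega}\to\Omega$ as a covering whose fibers are precisely the $2\pi i\,\z$-orbits, descending $h$ to a well-defined $f$, and then using local holomorphic branches of the logarithm to transfer meromorphy is exactly the standard argument this statement calls for. Note that the paper itself labels the lemma elementary and gives no proof at all, so your write-up simply supplies the argument the authors implicitly rely on (the only cosmetic remark being that $f$ should be allowed to take values in $\overline{\c}$, as $h$ does, which is an imprecision in the paper's statement rather than in your proof).
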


\section{The local problem: proof of Theorem \ref{t1}.}\label{s3}

A general description of all solutions to $(L)$, in the spirit of
the main result in \cite{GaMi}, is given by the following theorem.
We let $D_{\ep}^*$ denote $\{z\in \C: 0<|z|<\ep$\}.
\begin{teorema}\label{t1}
Let $v$ be a solution of $(L)$. Then there exists a meromorphic
function $F:D_{\ep}^{\ast}\fl\overline{\c}$ such that $v$ can be
computed from \eqref{rep} for a locally univalent meromorphic
function $g:\overline{D_{\ep}^+}\setminus\{0\}\flecha \overline{\C}$
given by one of the following expressions:
\begin{enumerate}
\item[(i)] $g(z)=\psi(z^{\gamma}\,F(z))$, with $\gamma\in[0,1)$ and $F(r)\in\r\cup\{\infty\}$ for any $r\in\r\cap D_{\ep}^{\ast}$,
\item[(ii)] $g(z)=\psi(F(z)+\log(z))$, with $F(r)\in\r\cup\{\infty\}$ for any $r\in\r\cap D_{\ep}^{\ast}$,
\item[(iii)] $g(z)=\psi(z^{i \gamma}\,F(z))$, with $\gamma<0$ and $|F(r)|=1$ for any $r\in\r\cap D_{\ep}^{\ast}$.
\end{enumerate}
Here, $\psi$ is a M\"obius transformation and $g$ is holomorphic
with $1+K|g|^2>0$ if $K\leq 0$.

Conversely, let $g:\overline{D_{\ep}^+}\setminus\{0\}\flecha
\overline{\C}$ be a locally univalent meromorphic function,
holomorphic with $1+K|g|^2>0$ if $K\leq 0$, constructed from a
meromorphic function $F:D_{\ep}^*\flecha \overline{\C}$ as in
$(i)-(iii)$ above. Then, the function $v$ given by \eqref{rep} is a
solution of problem $(L)$.
\end{teorema}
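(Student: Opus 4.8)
The plan is to recover the developing map $g$ from the Schwarzian $Q$ and to control its multivaluedness around the origin. First I would apply the computation in Lemma \ref{mainlem}(i) to each boundary interval separately: since $\Im Q$ vanishes along $I_\ep^+$ and along $I_\ep^-$ (the constants $c_1,c_2$ enter only through a term that cancels), the holomorphic $Q$ is real on $(-\ep,\ep)\setminus\{0\}$ and extends by Schwarz reflection to a holomorphic function on $D_\ep^{\ast}$ with $Q(\bar z)=\overline{Q(z)}$. It is convenient to unroll the singularity by the change of variable $w=\log z$, which sends $D_\ep^+$ to the half-strip $S=\{w:\Re w<\log\ep,\ 0<\Im w<\pi\}$, sends $I_\ep^+,I_\ep^-$ to the two boundary lines $\Im w=0$ and $\Im w=\pi$, and turns $(L)$ into a Liouville problem for $\tilde v(w):=v(e^w)+2\Re w$ with a Neumann condition on each of these lines. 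On the simply connected strip $S$ the developing map of $\tilde v$ is the single-valued map $G(w):=g(e^w)$.

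Next I would build the monodromy. By Lemma \ref{mainlem}(ii)--(iii), $G$ extends meromorphically and locally univalently across each boundary line, mapping $\{\Im w=0\}$ into a circle $\cC_1$ and $\{\Im w=\pi\}$ into a circle $\cC_2$; the extensions are the circle reflections $G\mapsto R_1\circ\sigma^{\ast}G$ and $G\mapsto R_2\circ\sigma^{\ast}G$, where $R_j$ is inversion in $\cC_j$ and $\sigma$ is conjugation. Composing the two reflections in the parallel lines $\Im w=0$ and $\Im w=\pi$ produces the translation $w\mapsto w+2\pi i$ in the domain, so $G$ satisfies the quasi-periodicity $G(w+2\pi i)=M\bigl(G(w)\bigr)$ for the M\"obius transformation $M:=R_2\circ R_1$. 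Since the composition of two inversions in circles is always elliptic, parabolic or hyperbolic (equivalently, the symmetry $Q(\bar z)=\overline{Q(z)}$ makes the normalized period map have real trace), no loxodromic case occurs, and $M$ is of exactly one of these three types.

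These three possibilities give the three cases. I would pick a M\"obius $\psi$ conjugating $M$ to a normal form and set $\Gt:=\psi^{-1}\circ G$. In the elliptic case $\Gt(w+2\pi i)=e^{2\pi i\gamma}\Gt(w)$, so $e^{-\gamma w}\Gt(w)$ is $2\pi i$-periodic and, by Lemma \ref{l1}, descends to a meromorphic $F$ on $D_\ep^{\ast}$ with $\psi^{-1}\circ g=z^{\gamma}F(z)$. In the parabolic case $\Gt(w+2\pi i)=\Gt(w)+2\pi i$, so $\Gt(w)-w$ descends to $F$ with $\psi^{-1}\circ g=\log z+F(z)$. In the hyperbolic case $\Gt(w+2\pi i)=\mu\,\Gt(w)$ with $\mu>0$ real, $\mu\neq1$, and writing $\mu=e^{-2\pi\gamma}$ (with $\gamma<0$ after possibly swapping the two fixed points of $M$ by $\psi$) the function $z^{-i\gamma}\Gt$ is $2\pi i$-periodic and descends to $F$ with $\psi^{-1}\circ g=z^{i\gamma}F(z)$. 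In each case the normalization of $\psi$ can be adapted to the invariant circles of $M$ so that $\cC_1=\psi(\R\cup\{\infty\})$; combining this with the circle-image property of the two boundary lines yields the stated reality conditions on $F$ ($F(r)\in\R\cup\{\infty\}$ in cases (i)--(ii), $|F(r)|=1$ in case (iii)). Finally, the elliptic angle lies in $[0,2\pi)$ and changing $\gamma$ by an integer alters $F$ only by a single-valued factor, so one may take $\gamma\in[0,1)$; the clause $1+K|g|^2>0$ for $K\le 0$ is inherited from $v$ through \eqref{rep}. For the converse, given $g$ of one of the forms (i)--(iii) I would define $v$ by \eqref{rep}, so that Theorem \ref{repth} gives $\Delta v+2Ke^v=0$ in $D_\ep^+$, and then run this analysis backwards: the reality condition on $F$ forces $g$ to send $I_\ep^+$ and $I_\ep^-$ into two circles of $\overline{\C}$ (e.g. in case (i), $r^{\gamma}F(r)\in\R$ for $r>0$ and $z^{\gamma}F(z)\in e^{i\pi\gamma}\R$ for $r<0$), whence each boundary arc has constant geodesic curvature for the isometry onto $\cQ^2(K)$, which is equivalent to a Neumann condition $v_t=c_j e^{v/2}$ with constant $c_j$ on $I_\ep^{\pm}$, as in Lemma \ref{mainlem}(iii).

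The main obstacle I anticipate is the forward direction's global control of the multivalued developing map near the unprescribed singularity at $0$: with neither finite area nor a conical assumption available, everything rests on showing that the period map $M$ is a genuine M\"obius transformation of exactly one of the three admissible types and that the defactored function $F$ is single-valued and meromorphic on all of $D_\ep^{\ast}$. Making the reflection-and-monodromy bookkeeping precise — in particular justifying the real-trace restriction on $M$ and the adapted normalization of $\psi$ that produces the clean reality conditions on $F$ — is the delicate part; the remaining computations are routine.
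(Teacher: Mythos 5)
Your proposal is correct in substance and shares the paper's skeleton --- unrolling by $w=\log z$, establishing a monodromy relation $G(w+2\pi i)=M(G(w))$, splitting into three cases, and descending to a single-valued meromorphic $F$ via Lemma \ref{l1} --- but it differs in the one step that carries the real content: how $M$ is obtained and classified. The paper never iterates reflections; it observes that $\{\gt,w\}=e^{2w}Q(e^w)-\tfrac12$ is holomorphic and $2\pi i$-periodic on all of $\widetilde{D_{\ep}^*}$, so $\gt$ extends globally by solvability of the Schwarzian equation, and the monodromy exists by uniqueness of such solutions up to M\"obius maps; it then pins down $\Psi$ explicitly in each case from the functional equation $\zeta=\Psi(\bar{\zeta})$, valid on all of $\varphi(\cC_2)$ because the boundary image has nonempty interior in that circle. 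You instead extend $G$ by Schwarz reflection across each boundary line and read off $M=R_2\circ R_1$ as a product of two circle inversions, whose classical classification (elliptic, parabolic or hyperbolic according to whether $\cC_1,\cC_2$ meet in two, one or zero points; never loxodromic) gives the trichotomy. Both routes are valid; the paper's avoids exactly the bookkeeping you flag as delicate (consistency of the iterated reflections over infinitely many strips, which you need before Lemma \ref{l1} can be applied on the full region $\{\Re w<\log\ep\}$), while yours makes the geometric origin of the three cases more transparent. Two imprecisions in your write-up, neither fatal: first, the parenthetical claim that the symmetry $Q(\bar z)=\overline{Q(z)}$ alone forces real trace is not quite right --- it only forces $\overline{A}=\pm A^{-1}$ for a lift $A\in\SL$, hence trace real \emph{or} purely imaginary, and it is the product-of-inversions structure that genuinely kills the loxodromic case; second, the normalization ``$\cC_1=\psi(\R\cup\{\infty\})$ in each case'' is inconsistent with your own (correct) conclusion in case (iii), where the fixed points of the hyperbolic $M$ are separated by $\cC_1$, so $\psi^{-1}(\cC_1)$ must be taken to be the unit circle, giving $|F(r)|=1$ rather than $F(r)\in\R\cup\{\infty\}$.
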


\begin{nota} Theorem \ref{t1} also provides all the
solutions of the global problem $(P)$. For that, it is enough to
consider $\varepsilon=\8$ in the previous theorem, that is, to
change $D_{\varepsilon}^{\ast}$ by $\c^{\ast}$.
\end{nota}

\begin{proof}

Let $v\in C^2(\overline{D_{\ep}^+}\setminus\{0\})$ be a solution of
problem $(L)$, and consider an associated developing map $g$. As
explained in Lemma \ref{mainlem}, the Schwarzian map $Q$ of $v$,
given by \eqref{schw}, extends holomorphically to the punctured disk
$D_{\ep}^*$. Consider now the covering map $w\mapsto e^w$, from
$\widetilde{D_{\ep}^*}=\{z\in\c:\ Re(z)<\log\ep\}$ to $D_{\ep}^*$,
which is a local biholomorphism. Then, in the region of
$\widetilde{D_{\ep}^*}$ such that $0<{\rm Im} w<\pi$ we can take the
meromorphic map $\gt$ given by
\begin{equation}\label{1}
\gt(w)=g(e^w).
\end{equation}
Moreover, the Schwarzian of $\gt(w)$ satisfies
 \begin{equation}\label{scq}
\{\gt,w\}= e^{2w} Q(e^w) -\frac{1}{2}.
 \end{equation}
As $Q(e^w)$ is globally defined and holomorphic in
$\widetilde{D_{\ep}^*}$, we see by the existence of solutions to the
Schwarzian equation that $\gt(w)$ can be extended to a locally
univalent meromorphic function globally defined on
$\widetilde{D_{\ep}^*}$. In addition, since the right hand side of
\eqref{scq} is $2\pi i$-periodic, and since solutions to the
Schwarzian equation $\{y,w\}=q(w)$ are unique up to Möbius
transformations, we see that the meromorphic function
$\gt:\widetilde{D_{\ep}^*}\flecha \overline{\C}$ satisfies
\begin{equation}\label{2}
\gt(w+2\pi i)=\psi(\gt(w))
\end{equation}
for a certain M\"obius transformation $\psi$.

As explained in Lemma \ref{mainlem}, $\gt(w)$ lies on a circle
${\cal C}_1\subset \overline{\C}$ for $\{w\in
\widetilde{D_{\ep}}:\Im(w)=0\}$, and $\gt(w)$ lies on another circle
${\cal C}_2\subset \overline{\C}$ for $\{w\in
\widetilde{D_{\ep}}:\Im(w)=\pi\}$. We will study the behavior of $g$
in terms of the relative position of both circles.

\vspace{0.3cm}

\noindent{\bf Case 1: ${\cal C}_1$ intersects ${\cal C}_2$ in two
points or they coincide.}

\vspace{0.3cm}

If ${\cal C}_1$ and ${\cal C}_2$ share at least two points, then we
can consider a M\"obius transformation $\varphi$ such that
$\varphi({\cal C}_1)$ is the circle
$\r\cup\{\infty\}\subseteq\overline{\c}$ and $\varphi({\cal C}_2)$
is the circle given by a straight line passing through the origin
and $\infty\in\overline{\c}$. For that, observe that $\varphi$ is
the composition of a M\"obius transformation which maps the previous
two points of ${\cal C}_1\cap{\cal C}_2$ into $\{0,\infty\}$, and a
rotation with respect to the origin.

From (\ref{1}) and (\ref{2}), the new locally univalent meromorphic
maps $G=\varphi\circ g$ and $\widetilde{G}=\varphi\circ
\widetilde{g}$ satisfy
\begin{equation}\label{3}
\Gt(w)=G(e^w)
\end{equation}
and
\begin{equation}\label{4}
\Gt(w+2\pi i)=\Psi(\Gt(w))
\end{equation}
for a certain M\"obius transformation $\Psi$.

For any real number $r\in(-\infty,\log \ep)$ we have
$\Gt(r)\in\varphi({\cal C}_1)\subseteq\r\cup\{\infty\}$. Hence, by
the Schwarz reflection principle,
\begin{equation}\label{5}
\Gt(w)=\overline{\Gt(\overline{w})}, \qquad \text{ for all
}w\in\widetilde{D_{\ep}}.
\end{equation}
Thus, from (\ref{4}) and (\ref{5}),
\begin{equation}\label{sietest}
\Gt(r+\pi i)=\Psi(\Gt(r-\pi i))=\Psi\left(\,\overline{\Gt(r+\pi
i)}\,\right), \qquad \text{ for all }r\in(-\infty,\log\ep).
\end{equation}
And, since the set $\{\Gt(r+\pi i):\ r\in(-\infty,\log\ep)\}$ lies
on the circle $\varphi({\cal C}_2)$ and has no empty interior in
$\varphi({\cal C}_2)$, then
$$
\zeta=\Psi(\overline{\zeta}),\qquad \text{ for all
}\zeta\in\varphi({\cal C}_2).
$$

But a M\"obius transformation is determined by the image of three
points, and $\varphi({\cal C}_2)$ passes through $0$ and $\8$. So,
if we take an arbitrary point $\zeta_0\in\varphi({\cal
C}_2)\backslash\{0,\infty\}$ we easily obtain that
$$
\Psi(\zeta)=\frac{\zeta_0}{\overline{\zeta_0}}\, \zeta,\qquad\text{
for all }\zeta\in\overline{\c}.
$$
Therefore, from (\ref{4}), we get
$$
\Gt(w+2\pi i)=e^{i\theta_0}\ \Gt(w),\qquad w\in\widetilde{D_{\ep}},
$$
where $e^{i\theta_0}=\zeta_0/\overline{\zeta_0}$ for a real constant
$\theta_0\in[0,2\pi)$. Finally, in order to obtain $\Gt$ we observe
that the new meromorphic function
\begin{equation}\label{6}
H(w)=e^{-\frac{\theta_0}{2\pi}\,w}\ \Gt(w)
\end{equation}
satisfies
$$
H(w+2\pi i)=H(w), \qquad w\in \widetilde{D_{\ep}}.
$$
So, from Lemma \ref{l1}, there exists a well defined meromorphic
function $F(z)$ in the punctured disk $D_{\ep}^{\ast}$ such that
$$
H(w)=F(e^w),\qquad w\in \widetilde{D_{\ep}}.
$$
Hence, (\ref{5}) and (\ref{6}) give
\begin{equation}\label{7}
\Gt(w)=e^{\gamma w}\ F(e^w),\qquad w\in \widetilde{D_{\ep}},
\end{equation}
with $\gamma=\theta_0/(2\pi)\in[0,1)$ and
$F(z)=\overline{F(\overline{z})}$, $z\in D_{\ep}^{\ast}$.

In particular, the developing map $g$ of any solution of the local
problem $(L)$ when ${\cal C}_1$ and ${\cal C}_2$ have at least two
common points is given, from (\ref{3}) and (\ref{7}), by
\begin{equation}\label{8}
g(z)=\frac{A\,z^{\gamma}\,F(z)+B}{C\,z^{\gamma}\,F(z)+D},
\end{equation}
for certain complex constants $A,B,C,D$, with $AD-BC=1$, which
determine the M\"obius transformation $\varphi^{-1}$.

\begin{nota}\label{r1}
If ${\cal C}_1={\cal C}_2$, then $\zeta_0\in\r$ and so $\gamma=0$.
\end{nota}

\vspace{0.3cm}

\noindent{\bf Case 2: ${\cal C}_1$ intersects ${\cal C}_2$ in a
unique point.}

\vspace{0.3cm}

Let $p_0$ be the common point of the circles ${\cal C}_1$ and ${\cal
C}_2$. Then we consider a M\"obius transformation $\varphi$ that
maps ${\cal C}_1$ to the circle
$\r\cup\{\infty\}\subseteq\overline{\c}$ and maps ${\cal C}_2$ to
the circle $\{z\in\c: \Im(z)=\pi\}\cup\{\infty\}$. For that, observe
that $\varphi$ can be seen as a M\"obius transformation mapping
${\cal C}_1$ into $\r\cup\{\infty\}$ which maps $p_0$ to $\infty$,
composed with a homothety.

As in the previous case, we define the new locally univalent
meromorphic maps $G=\varphi\circ g$ and $\widetilde{G}=\varphi\circ
\widetilde{g}$ which satisfy \eqref{3}, \eqref{4}, \eqref{5} and
\eqref{sietest}.

Since the set $\{\Gt(r+\pi i):\ r\in(-\infty,\log\ep)\}$ lies on the
circle $\{z\in\c:\ Im(z)=\pi\}\cup\{\infty\}$ and has no empty
interior there, then
$$
\zeta=\Psi(\overline{\zeta}),\qquad \text{for all } \zeta\in
\{z\in\c: \Im(z)=\pi\}\cup\{\infty\}.
$$
Therefore, $ \Psi(\zeta)=\zeta+2\pi i, $ and so, from (\ref{4}), $$
\Gt(w+2\pi i)=\Gt(w)+2\pi i,\qquad w\in\widetilde{D_{\ep}}.$$ Now,
the new meromorphic function $H(w)=\Gt(w)-w$ satisfies $ H(w+2\pi
i)=H(w) $ for all $w\in \widetilde{D_{\ep}}$. Hence, using Lemma
\ref{l1} for the meromorphic function $H(w)$, there exists a well
defined meromorphic function $F(z)$ in the punctured disk
$D_{\ep}^{\ast}$ such that
\begin{equation}\label{12}
\Gt(w)=F(e^w)+w,\qquad w\in \widetilde{D_{\ep}}.
\end{equation}
Moreover, from (\ref{5}), $F(z)=\overline{F(\overline{z})}$, $z\in
D_{\ep}^{\ast}$.

With all of this, the developing map $g$ of any solution of the
local problem $(L)$ when ${\cal C}_1$ and ${\cal C}_2$ have only one
common point is given, from (\ref{3}) and (\ref{12}), by
\begin{equation}\label{13}
g(z)=\frac{A\,(F(z)+\log z)+B}{C\,(F(z)+\log z)+D},
\end{equation}
for certain complex constants $A,B,C,D$, with $AD-BC=1$, which
determine the M\"obius transformation $\varphi^{-1}$.

\vspace{0.3cm}

\noindent{\bf Case 3: ${\cal C}_1$ does not intersect ${\cal C}_2$.}

\vspace{0.3cm}

In this case, it is well known that there exists a M\"obius
transformation $\varphi$ such that the image of the circles ${\cal
C}_1$ and ${\cal C}_2$ are the circles centered at the origin with
radii $1$ and $R>1$, respectively.

We start by considering the locally univalent meromorphic maps
$G=\varphi\circ g$ and $\widetilde{G}=\varphi\circ \widetilde{g}$,
which satisfy again (\ref{3}) and (\ref{4}) for a certain M\"obius
transformation $\Psi$.

Given a real number $r\in(-\infty,\log \ep)$ we have $|\Gt(r)|=1$.
So, from the Schwarz reflection principle
\begin{equation}\label{16}
\Gt(w)=\frac{1}{\,\overline{\Gt(\overline{w})}\,}, \qquad
w\in\widetilde{D_{\ep}}.
\end{equation}
In addition, from (\ref{4}),
$$
\Gt(r+\pi i)=\Psi(\Gt(r-\pi
i))=\Psi\left(\frac{1}{\,\overline{\Gt(r+\pi i)}\,}\right), \qquad
r\in(-\infty,\log\ep).
$$
Thus, proceeding as in the previous cases, we have
$$
\zeta=\Psi(\frac{1}{\,\overline{\zeta}\,}),\qquad \text{ for
}|\zeta|=R,
$$
that is, $\Psi(\frac{1}{R}\,e^{i\theta})=R\,e^{i\theta}$ for any
$\theta\in\r$.

Therefore, $ \Psi(\zeta)=R^2\zeta, $ and so, from (\ref{4}),
$$
\Gt(w+2\pi i)=R^2\,\Gt(w),\qquad w\in\widetilde{D_{\ep}}.
$$
Then we can apply Lemma \ref{l1} to the meromorphic function
$$
H(w)=e^{-\frac{\log(R^2)}{2\pi i}\,w}\ \Gt(w).
$$
Hence, there exists a well defined meromorphic function $F(z)$ in
$D_{\ep}^{\ast}$ such that
$$
\Gt(w)=e^{i\gamma w}F(e^w),\qquad w\in \widetilde{D_{\ep}}
$$
for the negative real constant $\gamma=-\frac{\log(R^2)}{2\,\pi}$.
Moreover, from (\ref{16}), $F(z)\,\overline{F(\overline{z})}=1$, for
any $z\in D_{\ep}^{\ast}$.

As a consequence, the developing map $g$ of any solution of the
local problem $(L)$ when ${\cal C}_1$ and ${\cal C}_2$ have no
common points is given, from (\ref{3}), by
\begin{equation}\label{17}
g(z)=\frac{A\,z^{\gamma\, i}F(z)+B}{C\,z^{\gamma\, i}F(z)+D},
\end{equation}
for certain complex constants $A,B,C,D$, with $AD-BC=1$, which
determine the M\"obius transformation $\varphi^{-1}$.

This completes the proof of the first part of Theorem \ref{t1}.
Also, the converse part of the theorem is just a straightforward
computation, so we are done.
\end{proof}

\section{Finite area: proof of Theorem \ref{c1}}

The following result describes the solution to problem $(L)$ under
the additional assumption \eqref{localfinit} of finite area.
\begin{teorema}\label{c1}
Let $v$ be a solution of $(L)$ that satisfies the finite energy
condition \eqref{localfinit}. Then, its developing map $g$ is given
by the cases (i) or (ii) in Theorem \ref{t1}, and $F$ does not have
an essential singularity at the origin.

In particular, $g$ can be continuously extended to the origin, and
the Schwarzian map $Q:D_{\ep}^{\ast}\fl\c$ of $v$ has at most a pole
of order two at $0$.
\end{teorema}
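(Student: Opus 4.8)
The plan is to combine the description of $g$ furnished by Theorem~\ref{t1} with the finite-area hypothesis \eqref{localfinit}, the latter pinning down the singularity of $F$ at the origin. First I would transfer the area to the image. Since the developing map is a local isometry from $(D_\ep^+,e^v|dz|^2)$ onto its image in $\cQ^2(K)$ (Remark~\ref{remiso}), we have $\int_{D_\ep^+}e^v=\int_{D_\ep^+}g^*(ds_K^2)$, the $ds_K^2$-area of $g(D_\ep^+)$ counted with multiplicity. Writing $g=\psi\circ h$ with $h=z^\gamma F$, $F+\log z$ or $z^{i\gamma}F$ in cases (i), (ii), (iii), this becomes $\int_{D_\ep^+}h^*\sigma^2$ for the fixed smooth conformal metric $\sigma^2=\psi^*(ds_K^2)$. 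I would also record the dictionary between the singularities of $Q$ and of $F$: from \eqref{scq}, putting $\hat q(z):=z^2Q(z)-\tfrac12$ one has $\{\gt,w\}=\hat q(e^w)$, so $Q$ has at most a double pole at $0$ iff $\hat q$ is bounded near $0$ iff $F$ is meromorphic at $0$; moreover the leading (double-pole) coefficient of $Q$ is exactly what separates the winding case (iii) from cases (i)--(ii).

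The core of the argument is to exclude an essential singularity of $F$ at $0$, and this is the hard part. I would argue by contradiction: if $F$ has an essential singularity, I claim the area is infinite. The mechanism is value distribution, namely that a meromorphic map with an essential singularity sweeps out infinite spherical area over a punctured neighbourhood (this is the analytic content behind the finite-area asymptotics of \cite{Bry,ChWa,Hei,Nit,War}, and for $F=e^{1/z}$ it is an explicit computation). To feed this into our boundary setting I would pass to the strip picture, where $\gt(w)=g(e^w)$ is single-valued on $\{\Re\,w<\log\ep\}$ and carries both the monodromy \eqref{2} and the reflection symmetries used in Theorem~\ref{t1} (cf.\ \eqref{5} and \eqref{16}); the reflection lets me recover the covering over a \emph{full} punctured neighbourhood of $0$ from the covering over the half-disk, and transfer the infinite-area conclusion back to $\int_{D_\ep^+}h^*\sigma^2$. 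For $K=1$ this is cleanest, since $\sigma^2$ is comparable to $ds_1^2$ on the whole compact sphere, so $\sigma^2$-covering area and round-covering area agree up to a bounded factor. For $K=0,-1$ the target is non-compact and $\sigma^2$ is no longer globally comparable to a fixed smooth metric (it blows up at one point, resp.\ along a circle), so I would instead bound the area directly from the divergence of the image, using that $h$ inherits the essential singularity of $F$ despite the tame factor $z^\gamma$ or the additive term $\log z$.

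Once $F$ is known to be meromorphic at $0$, case (iii) is easy to discard. Here $|F|=1$ on the reals forces $F(0)\neq 0,\infty$, so $F$ is bounded away from $0$ and $\infty$ near the origin, while $z^{i\gamma}=e^{i\gamma\log z}$ with $\gamma<0$ has modulus $e^{-\gamma\arg z}$ independent of $|z|$ but argument $\gamma\log|z|\to+\infty$ as $z\to0$. Hence $h=z^{i\gamma}F$ winds infinitely many times around the annulus $\{1<|\zeta|<R\}$ onto which $\varphi$ sends the region between the two disjoint circles $\cC_1,\cC_2$; that annulus has positive (possibly infinite) $\sigma^2$-area, so each complete turn adds a fixed positive amount and the total is infinite, contradicting \eqref{localfinit}. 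Thus only cases (i) and (ii) survive.

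Finally, with $F$ meromorphic at $0$ and $g$ of type (i) or (ii), the remaining assertions are a direct computation. Writing $F(z)=z^{-m}u(z)$ with $u$ holomorphic and $u(0)\neq0$, in case (i) one has $h=z^{\gamma-m}u(z)$, which tends to $0$ or $\infty$ according to the sign of $\gamma-m$ (or to $u(0)$ when $\gamma=m=0$) as $z\to0$ with $\arg z\in[0,\pi]$ bounded, so $g=\psi\circ h$ extends continuously to $0$; likewise in case (ii) $F+\log z\to\infty$. For the Schwarzian, using $\{z^\beta,z\}=\tfrac{1-\beta^2}{2z^2}$ and $\{\log z,z\}=\tfrac1{2z^2}$ together with the fact that a meromorphic perturbation adds at worst a double pole, one reads off that $Q=\{g,z\}=\{h,z\}$ has at most a pole of order two at $0$. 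As noted, the genuine obstacle is the essential-singularity step, specifically carrying the value-distribution estimate through the boundary reflection and the twisting factor $z^\gamma$ uniformly for all $K\in\{-1,0,1\}$.
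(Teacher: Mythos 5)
Your skeleton is right (use Theorem~\ref{t1}, let finite area kill essential singularities and case (iii), then compute), and your final computations (continuous extension, $\{z^{\beta},z\}$, double pole of $Q$) match the paper. But the step you yourself flag as the ``genuine obstacle'' is a genuine gap, and it is exactly the step where the paper does something different. Your mechanism --- $F$ essential $\Rightarrow$ $F$ covers infinite spherical area over $D_{\ep}^{*}$ (Picard), then use the reflection to pass to the upper half-disk --- works only for the map $F$ itself, i.e.\ only in the sub-case $\gamma=0$, $\cC_1=\cC_2$, where $h=F$; and indeed that is precisely the one sub-case in which the paper argues via Picard. In all other sub-cases the area in \eqref{localfinit} is the covering area of $h=z^{\gamma}F$, $F+\log z$ or $z^{i\gamma}F$, not of $F$, and infinite covering area of $F$ does \emph{not} transfer through the twist: if $F(z_k)=w$ for infinitely many $z_k\to 0$, these points are preimages under $h$ of the \emph{moving} values $z_k^{\gamma}w$ (resp.\ $w+\log z_k$), which accumulate at $0$ (resp.\ $\infty$), so they give no lower bound whatsoever on the covering multiplicity $n_h(\zeta)$ of any fixed value $\zeta$, hence none on $\int n_h\,dA$. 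There is also no boundary analogue of Picard you can invoke: $0$ is a boundary point of the half-disk, and cluster-set theory at boundary points is far weaker than value distribution at interior punctures. The paper's actual mechanism avoids this entirely: it applies Marty's and Montel's theorems \emph{directly to $g$} on rescaled half-annuli to prove $\limsup_{z\to 0}|z|\,g^{\sharp}(z)=0$, hence that the spherical lengths $L(r)$ of $g(C_r)$ tend to $0$; since $g(r)\in\cC_1$ and $g(-r)\in\cC_2$, this forces $g$ to converge at $0$ to a point of $\cC_1\cap\cC_2$ (and immediately kills case (iii), where $\cC_1\cap\cC_2=\emptyset$, with no information about $F$ needed). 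Meromorphy of $F$ is then a \emph{consequence} of this uniform limit: e.g.\ if $z^{\gamma}F\to 0$ then $|zF|\to 0$ on the upper half-disk, hence by the reflection $F(\bar z)=\overline{F(z)}$ on all of $D_{\ep}^{*}$, so $zF$ extends holomorphically and $F$ has at most a pole. Your proposal has no substitute for this chain, and your case (iii) winding argument, while fine once $F$ is known meromorphic with $|F(0)|=1$, sits downstream of the missing step.

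The second gap is the case $K=0,-1$, where your suggestion to ``bound the area directly from the divergence of the image'' is not an argument. The paper disposes of this in three lines before doing anything else: if $g$ is the developing map of a finite-area solution $v$ of $(L)$ for $K_0\in\{0,-1\}$, then $v_1:=\log\bigl(4|g'|^2/(1+|g|^2)^2\bigr)$ is a solution of $(L)$ for $K=1$ with the \emph{same} developing map, and $e^{v_1}\le e^{v}$ pointwise since $(1+K_0|g|^2)^2\le(1+|g|^2)^2$; so $v_1$ also satisfies \eqref{localfinit}, and it suffices to prove the theorem for $K=1$, where the target is the compact sphere and all the length/area arguments live. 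This reduction (or, alternatively, the observation that for $K\le 0$ the image of $g$ omits an open set, so $z^{\gamma}F$ etc.\ are bounded and meromorphy of $F$ is immediate) is what your plan is missing to make the non-compact cases uniform with $K=1$.
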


\begin{proof}
Let us start by explaining that it suffices to prove the result for
the case $K=1$. Indeed, let $v$ be a solution of
$(L)-$\eqref{localfinit} for a constant $K=K_0=-1,0$ and $g$ an
associated developing map. Now, let us consider the function $v_1$
given by \eqref{rep} for the map $g$ and $K=1$. Then, $v_1$ is also
a solution of $(L)$, but in this case for $K=1$, and it also
satisfies \eqref{localfinit} since
$$
\int
e^{v_1}\,|dz|=\int\frac{4|g'(z)|^2}{(1+|g(z)|^2)^2}\,|dz|\leq\int\frac{4|g'(z)|^2}{(1+K_0\,|g(z)|^2)^2}\,|dz|=\int
e^v\,|dz|<\infty.
$$
In other words, if the result is true for $K=1$, it will
automatically be true for $K=-1,0$, as claimed.

Thus, let $g$ be the developing map of a solution to $(L)$ for
$K=1$. First, let us prove that the lengths of the semicircles $$
C_r=\{z\in D_{\varepsilon}^{\ast}:\ |z|=r,\ {\rm Im}(z)\geq0\},
$$
for the metric $e^v|dz|^2$ of constant curvature $K=1$, tend to zero
when $r$ goes to zero.

If we denote by $L(r)$ the length of the semicircle $C_r$ for $e^v
|dz|^2$, and write $z=r\,e^{i\theta}$, we have from \eqref{rep}
$$
L(r)=r\int_0^{\pi} e^{v/2}d\theta=r\int_0^{\pi}
\frac{2|g'|}{1+|g|^2}d\theta\leq2\pi \sup_{0\leq\theta\leq\pi}(r\,
g^{\sharp}(r\,e^{i\theta}))
$$
where $g^{\sharp}(z)$ is the spherical derivative of $g$ with
respect to $z$, that is,
$$
g^{\sharp}(z):=\frac{|g'(z)|}{1+|g(z)|^2}.
$$

Hence,
\begin{equation}\label{18}
\limsup_{|z|\rightarrow0} L(|z|)\leq
2\pi\limsup_{z\rightarrow0}\left(|z|\,g^{\sharp}(z)\right).
\end{equation}

\vspace{0.3cm}

\noindent {\it Claim: in the above conditions, we have
$\limsup_{z\rightarrow0}\left(|z|\,g^{\sharp}(z)\right)=0$.}

\vspace{0.3cm}

Let us prove the claim above. Assume that
$\limsup_{z\rightarrow0}\left(|z|\,g^{\sharp}(z)\right)\neq0$, and
so $\limsup_{z\rightarrow0}g^{\sharp}(z)=\infty.$ Consider for some
$r\in (\ep/4,\ep/2)$ the fixed domain
$$\Omega=\{z\in\r^2_+:\ r\leq|z|\leq\varepsilon\},$$ and the family of functions
$$g_n(z)=g\left(\frac{z}{2^n}\right),\quad z\in\Omega.$$  That is,
the functions of the family $\mathfrak{G}=\{g_n\}_{n\in\n}$ are
nothing but the   function $g$ evaluated over a domain
$\Omega_n\subset\r^2_+$ that gets smaller and closer to the origin
as $n$ increases. Moreover, by the choice of $r$, it holds
$\Omega_n\cap\Omega_{n+1}\neq \emptyset$ and
$\Omega_n\cap\Omega_{n+2}=\emptyset$.  The Theorem of Marty (see
\cite{Ma}) characterizes a family of meromorphic functions $
\mathfrak{G}$ as a normal family if and only if for every compact
$K\subset\Omega$ there exists a constant $M(K)$ such that $g_n^{
\sharp}(z)\leq M(K)$ on $K$ for every function $g_n\in
\mathfrak{G}$. Thus, as
$\limsup_{z\rightarrow0}g^{\sharp}(z)=\infty$, $\mathfrak{G}$ is not
normal on $\Omega$. On the other hand, the Theorem of Montel  (see
\cite{Mo}) asserts that as $\mathfrak{G}$ is not normal there will
be at least one function $g_{n_0}\in  \mathfrak{G}$ that takes every
complex value with at most two exceptions on $\Omega$. Then, as
$\mathfrak{G}\setminus \{g_{n_0}\}$ is not a normal family, we can
iterate this argument and conclude that $g$ assumes every complex
value infinitely many times with at most to exceptions in a
neighborhood of the origin in $\r^2_+$. This means that $g(z)$
covers  an infinite area on $\{z\in D_{\varepsilon}^{\ast}:\ {\rm
Im}(z)\geq0\}$, which is a contradiction.

Thus, the claim is proved.

\hspace{0.2cm}

Observe that $g$ is a local isometry from $\{z\in
D_{\varepsilon}^{\ast}:\ {\rm Im}(z)\geq0\}$ with the metric
$e^v|dz|^2$ into the unit sphere $\overline{\c}$ with its standard
metric. Also, we know from Lemma \ref{mainlem} that
$g(D_{\varepsilon}\cap\r^+)$ lies on a circle ${\cal
C}_1\subseteq\overline{\c}$ and $g(D_{\varepsilon}\cap\r^-)$ lies on
a circle ${\cal C}_2\subseteq\overline{\c}$. So, $g(C_r)$ is a curve
in the sphere $\overline{\c}$ with length $L(r)$, joining the point
$g(r)$ of ${\cal C}_1$ and the point $g(-r)$ of ${\cal C}_2$.

Thus, the case $(iii)$ in Theorem \ref{t1} cannot happen because
this kind of solutions only occur when ${\cal C}_1\cap{\cal
C}_2=\emptyset$, which contradicts $\lim_{r\rightarrow0}L(r)=0$.

The solutions of type $(ii)$ in Theorem \ref{t1} happen when ${\cal
C}_1$ intersects ${\cal C}_2$ at a unique point $p_0\in
\overline{\c}$. Let us see that, in this case,
$\lim_{z\rightarrow0}g(z)=p_0$, what shows that the function $F(z)$
such that $g(z)=\psi(F(z)+\log(z))$ cannot have an essential
singularity at the origin as we wanted to show.

Let $\{z_n\}$ be a sequence converging to $0$, and $\delta>0$ small
enough. Now, let us prove that there exists $n_0\in \N$ such that if
$n\geq n_0$ then $d(g(z_n),p_0)<\delta$, where $d(,)$ denotes
distance in $\overline{\c}\equiv \cQ^2(1)$.

Let $D$ be the open disk of radius $\delta/2$ centered at $p_0$ and
$$\widehat{\delta}=\min\{d({\cal C}_1\backslash D,{\cal
C}_2),d({\cal C}_1,{\cal C}_2\backslash D)\}.$$ Since
$\lim_{r\rightarrow0}L(r)=0$ there exists $r_0>0$ such  that if
$r<r_0$ then $L(r)<\min \{ \delta,\widehat{\delta}\}$. Now, we
choose $n_0$ such that $|z_n|<r_0$ for all $n\geq n_0$. Thus,
$L(|z_n|)<\widehat{\delta}$ and so $g(|z_n|)\in{\cal C}_1\cap D$ and
$g(-|z_n|)\in{\cal C}_2\cap D$. Hence, if $d(g(z_n),p_0)\geq\delta$
we would have
$$
L(|z_n|)\geq
d(g(|z_n|),g(z_n))+d(g(z_n),g(-|z_n|))>\frac{\delta}{2}+\frac{\delta}{2}=\delta,
$$
which is a contradiction. This proves that
$\lim_{z\rightarrow0}g(z)=p_0$.

Let us show now that the Schwarzian derivative map $Q(z)$, which is
well defined in $D_{\varepsilon}^{\ast}$, has at most a pole of
order two at the origin. Since the Schwarzian derivative is
invariant under M\"obius transformations, in this case we only need
to do the computation for $g(z)=F(z)+\log z.$ As
$$
\frac{g''(z)}{g'(z)}=\frac{z^2 F''(z)-1}{z \left(z
   F'(z)+1\right)}
$$
has a pole of order one at the origin, we get from \eqref{schw} that
$Q(z)$ has at most a pole of order two there.

Finally, we analyze the solutions of the case $(i)$ in Theorem
\ref{t1}. They correspond to the situation in which ${\cal C}_1\cap
{\cal C}_2$ has exactly two points, or ${\cal C}_1={\cal C}_2$.

If ${\cal C}_1={\cal C}_2$ then from Remark \ref{r1} the associated
developing map $g$ is given by $g(z)=\psi(F(z))$, where $F(z)$ is a
meromorphic function in $D_{\varepsilon}^{\ast}$. In addition, the
meromorphic function $F(z)$ cannot have an essential singularity at
the origin. Otherwise, since $F(\overline{z})=\overline{F(z)}$,
$g(z)$ would take every value of $\overline{\c}$, except at most two
points, infinity many times in $\{z\in D_{\varepsilon}^{\ast}:\ {\rm
Im}(z)\geq 0 \}$ what would contradict the finite area condition.

If ${\cal C}_1\cap{\cal C}_2=\{p_1,p_2\}$, a similar argument to the
one we just used for the case that ${\cal C}_1\cap{\cal
C}_2=\{p_0\}\subset \overline{\C}$ lets us prove that there exists a
unique $i_0\in \{1,2\}$ such that $\lim_{z\rightarrow
0}g(z)=p_{i_0}$ . Thus, $g$ can be continuously extended to the
origin with $g(0)=p_{i_0}$ and $F(z)$ does not have an essential
singularity at $0$. Let us outline this argument.

Take again a sequence $\{z_n\}\to 0$ and $ 0<\delta$ satisfying
$\delta<d(p_1,p_2)/3$. Consider $D_i$ the open disk centered at
$p_i$ and radius $\delta/2$, and let
$$
\widehat{\delta}=\min\{d({\cal C}_1\backslash(D_1\cup D_2),{\cal
C}_2),d({\cal C}_1,{\cal C}_2\backslash(D_1\cup D_2))\}.
$$ Arguing as before, we can show that there exists a unique $i_0\in \{1,2\}$ such that $g(|z_n|),g(-|z_n|)\in
D_{i_0}$ for $n$ sufficiently large.

Hence, every point $z_n$ with $n$ sufficiently large satisfies
$d(g(z_n),p_{i_0})<\delta$, since otherwise
$d(g(z_n),p_{i_0})\geq\delta$, and so $L(|z_n|)\geq \delta$, which
contradicts that ${\rm lim}_{r\to 0} L(r)=0$. Therefore,
$\lim_{z\rightarrow0}g(z)=p_{i_0}$.

In order to finish the proof, let us show that the Schwarzian
derivative $Q(z)$ of $g(z)$ has at most a pole of order two at the
origin. Again by the invariance of the Schwarzian derivative under
M\"obius transformations, we only need to do the computation for
$g(z)=z^{\gamma}\,F(z)$. And since $F(z)$ has no essential
singularity at the origin,
$$
\frac{g''(z)}{g'(z)}=\frac{z^2  F''(z)+2 \gamma
  z F'(z)+(\gamma-1) \gamma F(z)}{z \left(z
   F'(z)+\gamma F(z)\right)}
$$
has at most a pole of order one there. Thus, from \eqref{schw},
$Q(z)$ has at most a pole of order two at the origin. This completes
the proof of Theorem \ref{c1}.
\end{proof}

Theorem \ref{c1} shows that for classifying the solutions to
$(L)$-\eqref{localfinit}, it suffices to determine when the
functions $v$ given by \eqref{rep} in terms of a developing map $g$
as in the statement of Theorem \ref{c1} verify \eqref{localfinit}.
We do this next.

First, assume that $g$ is given by case $(i)$ in Theorem \ref{t1},
where $F$ has at $0$ either a pole or a finite value. If we let
$A,B,C,D$ denote the coefficients of the Möbius transformations
defining $g$, with $AD-BC=1$, a computation gives $$e^v =\frac{
4|\gamma z^{\gamma-1} F(z) + z^{\gamma} F'(z)|^2}{(|C z^{\gamma}
F(z) + D |^2 + K |Az^{\gamma} F(z)+B|^2)^2}.$$ If $F$ has a finite
value (resp. a pole) at $0$, and if $|D|^2+K|B|^2\neq 0$ (resp.
$|C|^2 + K |A|^2\neq 0$), it is easy to check that near $0$ we have
$e^v = |z|^{2\alfa} \, a(z)$ where $\alfa>-1$ and $a(z)$ is
continuous at $0$, with $a(0)\neq 0$. Thus \eqref{localfinit} holds
in these cases.

We suppose now that $F$ has a finite value at $0$ and
$|D|^2+K|B|^2=0$. If $K=0$, then $D=0$ and
$e^v=a(z)|z|^{-2(\gamma+k+1)}$ with $a(z)$ continuous at $0$ and
$a(0)\neq0$, where $k$ is the order of the zero that $F$ has at the
origin (if $F(0)\neq 0$ then $k=0$). Thus \eqref{localfinit} does
not hold. If $K=-1$, the conditions $AD-BC=1$ and $|D|^2=|B|^2$ give
that $\delta=-A\overline{B}+C\overline{D} $ is such that
$|\delta|=1$, and so we can estimate
$$\def\arraystretch{2}\begin{array}{ll}
\displaystyle{\int_{D_{\varepsilon}^+}e^v}&=
\displaystyle{\int_{D_{\varepsilon}^+}\frac{4|\gamma z^{\gamma-1} F(z) + z^{\gamma} F'(z)|^2}{|z|^{2\gamma}|F|^2\left((|C|^2-|A|^2)|z|^{\gamma}|F(z)|+2{\rm Re}(\delta\frac{F(z)z^{\gamma}}{|F(z)||z|^{\gamma}})\right)^2}}\\
&\geq \displaystyle{\int_{D_{\varepsilon}^+}\frac{4|\gamma z^{\gamma-1} F(z) + z^{\gamma} F'(z)|^2}
{|z|^{2\gamma}|F|^2\left(|\, |C|^2-|A|^2 \, | \, |z|^{\gamma}|F(z)|+ 2\right)^2}}\\
& \geq
R\displaystyle{\int_0^{\varepsilon}\frac{dr}{r}=\infty},\end{array}$$
for a certain constant $R>0$, where $r=|z|$. So, \eqref{localfinit}
does not hold in this case.

We consider now the situation when $F$ has a pole of order $k\geq 1$
and $|C|^2+K|A|^2=0$. If $K=0$, we easily check that close to the
origin, $e^v=a(z)|z|^{2(\gamma-k-1)}$ with $a(z)$ continuous at $0$
and $a(0)\neq0$. So, \eqref{localfinit} does not hold since
$k>\gamma$. If $K=-1$ and $\delta:=-A\overline{B} + C\overline{D}$,
we also have $|\delta|=1$, and since $k>\gamma$ we can estimate as
before
 \begin{equation}\label{doubes}
 \begin{array}{ll}
\displaystyle{\int_{D_{\varepsilon}^+}e^v}&=\displaystyle{\int_{D_{\varepsilon}^+}\frac{4|\gamma z^{\gamma-1} F(z) +
z^{\gamma} F'(z)|^2}{ |F|^2|z|^{2\gamma}\left( \frac{|D|^2-|B|^2}{|F||z|^{\gamma}}+
2{\rm Re}(\delta \frac{F(z)z^{\gamma}}{|F||z|^{\gamma}}) \right)^2}}\\
&\geq
R\displaystyle{\int_0^{\varepsilon}\frac{dr}{r}=\infty},\end{array}
 \end{equation}
 for
a certain constant $R>0$, where $r=|z|$, and so \eqref{localfinit}
does not hold.

Next, assume that $g$ is given by case $(ii)$ in Theorem \ref{t1},
where again $F$ has at $0$ a pole or a finite value. Arguing as
before, since
 \begin{equation}\label{des2}
 e^v = \frac{4 |F'(z) + 1/z|^2}{(|C (F(z)+\log z)+D|^2
+ K |A (F+\log z) + B|^2)^2},
 \end{equation} we see that if $|C|^2+K|A|^2\neq 0$ and $F$ has a finite value (resp. a pole of order $k\geq 1$) at $0$,
then near $0$ we have $e^v = |z|^{-2} ({\rm ln} |z|)^{-4} a(z)$
(resp. $e^v=|z|^{2(k-1)} a(z)$) where $a(z)$ is continuous at $0$,
with $a(0)\neq 0$. Again, \eqref{localfinit} holds automatically.

Suppose now that $|C|^2 +K|A|^2=0$. If $K=0$, it is immediate to see
that \eqref{localfinit} does not hold. So, we are left with the case
$K=-1$ and $|A|=|C|$. Observe that this condition implies that
$g(0)\in \S^1$. As $g$ is given by case $(ii)$ in Theorem \ref{t1},
the images of $(0,\varepsilon)$ and $(-\varepsilon,0)$ by $g$ are two
circle arcs meeting tangentially at one point of $\S^1$.

Assume first that $F$ has a finite value at $0$.  If the constant
$\delta=-A\overline{B} + C\overline{D}$ is such that ${\rm Re}\,
\delta=0$, then we easily check that close to the origin
$$e^v=\frac{4|F'(z)+1/z|^2}{ (|D|^2-|B|^2-2{\rm Im}\delta ({\rm
Im}F(z)+\arg z))^2}\geq R |F'(z)+1/z|^2$$ for some $R>0$. Thus
\eqref{localfinit} does not hold. On the other hand, if ${\rm
Re}\delta\neq 0$ the asymptotic behavior of $e^v$ is
$$
e^v=\frac{|F'+1/z|^2}{(\ln|z|)^2(\frac{|D|^2-|B|^2}{2 \ln|z|}+{\rm
Re}\delta(1+\frac{{\rm Re}F(z)}{\ln|z|})-{\rm Im}\delta \frac{({\rm
Im}F(z)+\arg z)}{\ln|z|})^2}=\frac{a(z)}{|z|^2(\ln|z|)^2},$$ where
$a(z)$ is continuous at $0$ and $a(0)\neq0$. Then, we can easily
check that \eqref{localfinit} holds. Furthermore, a computation
shows that $c_1=-c_2 \in (-2,2)$.

Now, assume that $F$ has a pole at $0$, $K=-1$ and $|A|=|C|$. If
$\delta=-A\overline{B} + C\overline{D}$, we can proceed as we did
before and estimate
$$\begin{array}{ll}
\displaystyle{\int_{D_{\varepsilon}^+}e^v}&=\displaystyle{\int_{D_{\varepsilon}^+}\frac{4|F'(z)+1/z|^2}{ |F+\log z|^2\left( \frac{|D|^2-|B|^2}{|F+\log z|}+2{\rm Re}(\delta( \frac{F(z)+\log z}{|F+\log z|}) \right)^2}}\\
&\geq
R\displaystyle{\int_0^{\varepsilon}\frac{dr}{r}=\infty},\end{array}$$
for a certain constant $R>0$, where $r=|z|$. Again,
\eqref{localfinit} does not hold.

This completes the classification of all the solutions to
$(L)$-\eqref{localfinit}. In particular, we have the following
description of the asymptotic behavior of such solutions.

\begin{corolario}
Let $v\in C^2(\overline{D_{\ep}^+}\setminus\{0\})$ be a solution to
$(L)$-\eqref{localfinit}. There are three possible asymptotic
behaviors for $v$ at $0$:
 \begin{enumerate}
   \item
$\lim_{z\to 0} |z|^{-2\alfa} e^v \neq 0$, for some $\alfa>-1$, i.e.
$e^v |dz|^2$ has at $0$ a boundary conical singularity.
 \item
$\lim_{z\to 0} |z|^2 ({\rm ln } |z|)^4 e^v \neq 0$.
 \item
$\lim_{z\to 0} |z|^2 ({\rm ln } |z|)^2 e^v \neq 0$.
 \end{enumerate}
Here, the last case happens only when $K=-1$ and the boundary has
infinite length around $0$. In this last situation, it holds
$c_1=-c_2\in (-2,2)$.
\end{corolario}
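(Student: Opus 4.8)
The plan is to read the statement directly off the case-by-case analysis of $(L)$-\eqref{localfinit} carried out just above. By Theorem \ref{c1}, the developing map $g$ of $v$ must fall under case (i) or case (ii) of Theorem \ref{t1}, with the meromorphic function $F$ having no essential singularity at the origin; case (iii) is ruled out by the finite-area hypothesis. Thus $F$ has at $0$ either a finite value or a pole, and in each admissible subcase we already produced an explicit asymptotic expansion of $e^v$ near $0$. First I would simply collect these expansions and match each one to one of the three listed behaviors, discarding the subcases that were shown above to force infinite area and hence violate \eqref{localfinit}.

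Concretely, in case (i) the finite-area situations are exactly those with $|D|^2+K|B|^2\neq 0$ (when $F(0)$ is finite) or $|C|^2+K|A|^2\neq 0$ (when $F$ has a pole), and there $e^v=|z|^{2\alpha}a(z)$ with $\alpha>-1$ and $a(0)\neq 0$, so that $|z|^{-2\alpha}e^v\to a(0)\neq 0$: this is the first behavior, a boundary conical singularity. In case (ii) with $|C|^2+K|A|^2\neq 0$ there are two options: if $F$ has a pole of order $k\geq 1$ then $e^v=|z|^{2(k-1)}a(z)$, again a conical singularity (first behavior, with $\alpha=k-1\geq 0$), whereas if $F(0)$ is finite then $e^v=|z|^{-2}(\ln|z|)^{-4}a(z)$, the second behavior. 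The only remaining finite-area possibility is case (ii) with $K=-1$, $|A|=|C|$, $F(0)$ finite and ${\rm Re}\,\delta\neq 0$, where $e^v=a(z)/(|z|^2(\ln|z|)^2)$ with $a(0)\neq 0$, which is the third behavior. Since every other subcase was shown above to have infinite area, these three expansions exhaust the classification.

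It then remains to justify the two auxiliary assertions about the third behavior. That it occurs only for $K=-1$ is immediate, as it is the unique subcase producing the $|z|^2(\ln|z|)^2$ profile. For the boundary length I would integrate $e^{v/2}$ along the arcs $I_\ep^+$ and $I_\ep^-$, writing $s=|z|$ and substituting $u=\ln s$: in the first behavior $e^{v/2}\sim C\,s^{\alpha}$ with $\alpha>-1$, and in the second $e^{v/2}\sim C/(s(\ln s)^2)$, so $\int_0 e^{v/2}\,ds$ converges (the substitution turns the latter into $\int C\,u^{-2}\,du$); in the third behavior $e^{v/2}\sim C/(s|\ln s|)$ becomes $\int C\,|u|^{-1}\,du$, which diverges, so the boundary has infinite length around $0$ precisely here. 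Finally, the relation $c_1=-c_2\in(-2,2)$ is exactly the computation already recorded in the analysis of this subcase and can be quoted directly. The only genuine (and very short) computation is this length dichotomy; the rest is bookkeeping assembling the expansions obtained above, which is also the one place where some care is needed in tracking that the leading coefficient $a(0)$ is nonzero.
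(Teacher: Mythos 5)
Your proposal is correct and follows essentially the same route as the paper: the corollary is stated there as an immediate consequence ("In particular...") of the case-by-case classification following Theorem \ref{c1}, and your matching of subcases to the three asymptotic profiles, with the infinite-area subcases discarded, is exactly that bookkeeping. Your explicit length dichotomy (substituting $u=\ln s$ to see that only the $|z|^{-2}(\ln|z|)^{-2}$ profile gives divergent boundary length) is left implicit in the paper, but it is the right and obvious verification of the final assertion, while $c_1=-c_2\in(-2,2)$ is indeed just quoted from the earlier computation.
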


\section{Global solutions: proof of Theorem \ref{mainth1}.}

In Section 2 we described in detail a large family of explicit
solutions to $(P)$ with finite area: the \emph{canonical solutions}.
We prove next that these are actually the only solutions to $(P)$
with finite area.

\begin{teorema}\label{mainth1}
Any solution to $(P)$ satisfying the finite energy condition
\eqref{finitehalf} is a canonical solution.
\end{teorema}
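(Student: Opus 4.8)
The plan is to extract from the finite energy hypothesis enough control on the Schwarzian map $Q$ defined in \eqref{schw} to determine it completely, and then to read off $v$ by solving the Schwarzian equation $\{g,z\}=Q$. Since $v$ solves $(P)$ with finite area, its restriction to any half-disk $D_{\ep}^+$ is a solution of $(L)$ satisfying \eqref{localfinit}. By Lemma \ref{mainlem} the map $Q$ is holomorphic on $\c^{\ast}$ and takes real values on $\r\setminus\{0\}$ (it extends across each boundary ray by Schwarzian reflection), and by Theorem \ref{c1} it has at most a pole of order two at the origin; moreover the developing map $g$ falls into case $(i)$ or $(ii)$ of Theorem \ref{t1}, never case $(iii)$.

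First I would control $Q$ at infinity by exploiting the conformal symmetry $z\mapsto -1/z$. The M\"obius map $\phi(w)=-1/w$ is an automorphism of $\overline{\c^+}$ interchanging $0$ and $\8$ and swapping the two boundary rays. The conformal change of variable turns $v$ into a new solution $\hat v(w)=v(-1/w)+\log|\phi'(w)|^2$ of the Liouville equation; because the geodesic curvature of each boundary arc is intrinsic to the pulled-back metric, $\hat v$ again solves a problem of type $(P)$ (with $c_1,c_2$ interchanged), and $\int_{\c^+}e^{\hat v}=\int_{\c^+}e^{v}<\8$ since the metric is merely pulled back. Thus $\hat v$ meets the hypotheses of Theorem \ref{c1} near $w=0$, so its Schwarzian $\hat Q$ has at most a double pole there. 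Because $\phi$ is M\"obius, the cocycle rule for the Schwarzian gives $\hat Q(w)=Q(-1/w)\,w^{-4}$; the bound $\hat Q(w)=O(w^{-2})$ as $w\to 0$ therefore forces $Q(z)=O(z^{-2})$ as $z\to\8$.

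With these two facts in hand the determination of $Q$ is immediate: since $Q$ has at most a double pole at $0$, the function $z^2Q(z)$ extends holomorphically across the origin and is entire on $\c$; the estimate $Q(z)=O(z^{-2})$ at infinity makes $z^2Q(z)$ bounded, so by Liouville's theorem it is a constant $\beta$, and $\beta\in\r$ because $Q$ is real on the axis. Hence $Q=\beta/z^2$. It then remains to integrate $\{g,z\}=\beta/z^2$ on the simply connected domain $\c^+$. Using $\{z^{a},z\}=(1-a^2)/(2z^2)$ and $\{\log z,z\}=1/(2z^2)$, and the fact that solutions are unique up to a M\"obius transformation, the developing map is $g=\psi(z^{a})$ with $a=\sqrt{1-2\beta}>0$ when $\beta<1/2$, $g=\psi(\log z)$ when $\beta=1/2$, and $g=\psi(z^{i\kappa})$ when $\beta>1/2$. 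The last alternative is exactly case $(iii)$ of Theorem \ref{t1}, which Theorem \ref{c1} has already excluded for finite area solutions; so only the first two occur.

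Finally I would identify these with the canonical solutions of Definition \ref{defican}. Since the expression \eqref{rep} for $v$ is invariant under replacing $g$ by $\iota\circ g$ for any isometry $\iota$ of $\cQ^2(K)$ (that is, under \eqref{isom}), I may normalize $\psi$: by the Iwasawa decomposition every M\"obius transformation is an isometry of $\cQ^2(K)$ composed with a map of the form $\zeta\mapsto(\zeta-z_0)/\lambda$ with $\lambda>0$. Absorbing the isometry factor, $g$ becomes $(z^{\gamma}-z_0)/\lambda$ (with $\gamma=a$) or $(\log z-z_0)/\lambda$, which are precisely the developing maps of $v_1$ and $v_2$; the admissibility restrictions on $(\gamma,\lambda,z_0)$ in Definition \ref{defican} are forced by $g$ taking values in $\Sigma_K$ and by finiteness of the area, as discussed in Section 2. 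The main obstacle is the infinity step: one must check carefully that the inverted function $\hat v$ genuinely satisfies the hypotheses of Theorem \ref{c1}, and keep precise track of the order drop in the Schwarzian cocycle $\hat Q(w)=Q(-1/w)\,w^{-4}$, since it is exactly this $w^{-4}$ factor that upgrades the two local double-pole bounds into the global identity $Q=\beta/z^2$.
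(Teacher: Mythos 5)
Your determination of $Q$ reproduces the paper's argument exactly: restrict $v$ to half-disks so that Theorem \ref{c1} gives a double pole at worst at the origin, invert by $z=-1/w$ to produce a second finite-area solution of $(P)$ and apply Theorem \ref{c1} again, and combine the two bounds through the cocycle identity $w^4\,\widehat{Q}(w)=Q(z)$ to conclude $Q=c/z^2$; the integration of $\{g,z\}=c/z^2$ into the families $\psi(z^{\gamma})$, $\psi(\log z)$, $\psi(z^{i\kappa})$ and the exclusion of the third via Theorem \ref{c1} are also as in the paper. Up to that point your proof is correct.

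The gap is in your final normalization. The claim that ``by the Iwasawa decomposition every M\"obius transformation is an isometry of $\cQ^2(K)$ composed with $\zeta\mapsto(\zeta-z_0)/\lambda$'' is true only for $K=1$, where ${\rm Isom}(\cQ^2(1))=PSU(2)$ is a maximal compact subgroup of $PSL(2,\C)$ and this is the genuine Iwasawa decomposition. For $K=0$ the isometries in \eqref{isom} are $\zeta\mapsto e^{i\theta}\zeta+c$, so isometry composed with affine is again affine and fixes $\infty$; no M\"obius map with a pole decomposes this way. For $K=-1$ the group $PSU(1,1)$ is not compact and the decomposition also fails: any $\iota\circ\tau$ of your form sends $\infty$ to $\iota(\infty)\notin\overline{\D}$, whereas $\zeta\mapsto\lambda/(\zeta-z_0)$ sends $\infty$ to $0$. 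Worse, the excluded cases are precisely the ones that occur: for $K\leq 0$ an affine $g$ maps the unbounded sector $\{z^{\gamma}\}$ (or strip $\{\log z\}$) onto an unbounded set, so it can never satisfy $1+K|g|^2>0$ together with finite area (for $K=0$ it is the flat cone metric, of infinite area; for $K=-1$ the image cannot lie in $\D$ at all). Accordingly, the canonical solutions \eqref{eqv1} and \eqref{eqv2} for $K\leq 0$ have developing maps of inversion type, $\lambda/(z^{\gamma}-z_0)$ and $\lambda/(\log z-z_0)$, not your normal form; taken literally, your argument would ``prove'' that no finite-area solutions exist for $K=-1$, contradicting Corollary \ref{maincor}. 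The paper closes this step differently: it writes $\psi=(A\zeta+B)/(C\zeta+D)$, expands $e^v$, shows that the degenerate case $K|A|^2+|C|^2=0$ is incompatible with finite area (this requires the integral estimate \eqref{doubes}, plus a separate argument when $K=-1$, $g=\psi(\log z)$ and ${\rm Re}\,\delta\neq 0$), and otherwise completes the square via the substitution \eqref{landa} to land exactly on \eqref{eqv1} or \eqref{eqv2}. Your appeal to ``admissibility restrictions as discussed in Section 2'' does not substitute for this: Section 2 constrains the parameters $(\gamma,\lambda,z_0)$ inside the canonical form, while the actual content of the final step is ruling out the non-canonical M\"obius transformations.
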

\begin{proof}
Let $v$ be a solution of $(P)$ such that
$$
\int_{\r^2_+}e^v\,dx\,dy<\infty,
$$
and let $Q(z)$ be the Schwarzian map associated to $v$. From Theorem
\ref{c1}, $Q(z)$ has at most a pole of order two at the origin.

We observe that the new meromorphic function $h(w)=g(z)$ with
$z=-1/w$ is also the developing map of a solution $\widehat{v}$ to
$(P)$. Thus, since
$$
\int_{\r^2_+}e^{\widehat{v}}\,dx\,dy=\int_{\r^2_+}e^v\,dx\,dy<\infty,
$$
we obtain again from Theorem \ref{c1} that the Schwarzian derivative
$\widehat{Q}(w)$ has at most a pole of order two at the origin.

By using that $w^4\, \widehat{Q}(w)=Q(z)$, we conclude that $Q(z)$
is a holomorphic function in $\c^{\ast}$ with at most a pole of
order two at the origin and at least a zero of order two at
infinity. Therefore,
$$
Q(z)=\frac{c}{z^2},\qquad z\in\c^{\ast}
$$
for a certain constant $c\in\c$.

The solutions $g(z)$ of the Schwarzian equation \eqref{schw} for
$Q(z)=c/z^2$ are well known. They are given by $g(z)=\psi(\log(z))$
if $2c=1$ and by $g(z)=\psi(z^\gamma)$ if $2c=1-\gamma^2\neq1$,
where $\psi$ is an arbitrary M\"obius transformation. In the latter
case, if follows from Theorem \ref{c1} that in our situation
$\gamma$ must be a real constant. In fact, up to composition with
the M\"obius transformation $z\rightarrow1/z$ if necessary, we can
assume $\gamma>0$.

Finally, in order to finish the proof, we compute the solutions $v$
to our problem depending of the value of $g(z)$.

Let
$$
g(z)=\psi(z^{\gamma})=\frac{A\,z^{\gamma}+B}{C\,z^{\gamma}+D}
$$
with $AD-BC=1$.

Then, from \eqref{rep},
$$
e^v=\frac{4\gamma^2|z|^{2(\gamma-1)}}{(K|B|^2+|D|^2+(KA\overline{B}+C\overline{D})z^{\gamma}+
(K\overline{A}B+\overline{C}D)\overline{z}^{\gamma}+(K|A|^2+|C|^2)|z|^{2\gamma})^2}\,.
$$
If $K|A|^2+|C|^2=0$, an argument as in \eqref{doubes} proves that
$\int_{\R_+^2} e^v <\8$.  On the other hand, if $K|A|^2+|C|^2\neq 0$
we can take
\begin{equation}\label{landa}
\lambda=\frac{1}{\left|\,K|A|^2+|C|^2\,\right|},\qquad
z_0=-\frac{K\overline{A}B+\overline{C}D}{K|A|^2+|C|^2}
\end{equation}
and so we have
$$
e^v=\frac{4\lambda^2\gamma^2|z|^{2(\gamma-1)}}{(K\lambda^2+|z^{\gamma}-z_0|^2)^2}\,
,
$$
that is, we obtain the canonical solution \eqref{eqv1}, as wished.

Now, let us consider the case
\begin{equation}\label{geloj}
g(z)=\psi(\log z)=\frac{A\,\log z+B}{C\,\log z+D}.
 \end{equation}
with $AD-BC=1$.

Then, from \eqref{rep},
\begin{eqnarray*}
e^v&=&4/(|z|^2(K|B|^2+|D|^2+(KA\overline{B}+C\overline{D})\log z+ (K\overline{A}B+\overline{C}D)\log \overline{z}+\\
&&+(K|A|^2+|C|^2)|\log z|^{2})^2).
\end{eqnarray*}
If $K|A|^2+|C|^2=0$, the function $v$ has infinite area in $\r^2_+$.
This follows directly from our discussion after Theorem \ref{c1}
except when ${\rm Re} \delta\neq 0$ and $K=-1$, where here
$\delta:=-A\overline{B} + C\overline{D}$. But in that case, using
that $|A|=|C|$, the condition that the map $g$ in \eqref{geloj} must
satisfy $|g(z)|<1$ for every $z\in \C^+$ leads to the inequality
$$2{\rm Re} \delta \ln |z| >|B|^2-|D|^2 + 2 {\rm Im} \delta \, {\rm
arg} z,$$ which cannot hold since $\ln |z|:\C^+\flecha \R$ is
surjective. This proves the claim.

If $K|A|^2+|C|^2\neq0$ the function $v$ can be rewritten as
$$
e^v=\frac{4\,\lambda^2}{|z|^2(K\lambda^2+|\log z-z_0|^{2})^2},
$$
where $\lambda$ and $z_0$ are chosen as in (\ref{landa}). This
completes the proof of Theorem \ref{mainth1}.
\end{proof}

As a consequence, we get:
 \begin{corolario}\label{maincor}
Given $K\in\{-1,0,1\}$, $c_1,c_2\in\r$, there exists a solution to
the problem $(P)$ with finite area if and only if
\begin{itemize}
\item $K=1$, or
\item $K=0$ and $c_i<0$ for some $i\in\{1,2\}$, or
\item $K=-1$ and one of these conditions are satisfied: $$c_1<-2,\qquad or \qquad c_2<-2, \qquad or\qquad c_1+c_2<0.$$
\end{itemize}
\end{corolario}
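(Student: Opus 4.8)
The plan is to determine, for each value of $K$ and each prescribed pair $(c_1,c_2)$, whether the canonical solutions constructed in Section 2 actually realize those boundary constants. By Theorem \ref{mainth1}, every finite-area solution to $(P)$ is a canonical solution, so existence for a given $(K,c_1,c_2)$ is equivalent to the solvability of the equations \eqref{C1}--\eqref{C2} from Lemma \ref{lemacan} in the admissible range of parameters $\gamma,\lambda,z_0$ determined in Section 2. Thus the corollary reduces to a case-by-case analysis of these explicit trigonometric/affine systems, matching the image of the parameter map $(\gamma,\lambda,z_0)\mapsto(c_1,c_2)$ against the claimed inequalities.

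First I would dispatch the case $K=1$. Here the admissibility conditions in Section 2 are vacuous: both $v_1$ and $v_2$ are well defined and of finite area for \emph{all} $\gamma,\lambda>0$ and $z_0\in\C$. Writing $z_0=r_0e^{i\theta_0}$, the formulas \eqref{C1} give $c_1=2(r_0/\lambda)\sin\theta_0$ and $c_2=-2(r_0/\lambda)\sin(\theta_0-\pi\gamma)$, and I would check that as $\gamma,\lambda,r_0,\theta_0$ range freely one can hit an arbitrary pair $(c_1,c_2)\in\R^2$; this is a short surjectivity verification (for instance, fixing $\gamma$ and solving the resulting linear system in the two unknowns $(r_0/\lambda)\cos\theta_0$ and $(r_0/\lambda)\sin\theta_0$), establishing unconditional existence when $K=1$.

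Next I would treat $K=0$ and $K=-1$, where the admissibility constraints from Section 2 are genuine and cut out the stated inequalities. For $v_1$ when $K=0$, admissibility forces $z_0=0$ (giving $c_1=c_2=0$, corresponding to a cone) or $\pi\gamma<\theta_0$; feeding the latter into \eqref{C1} and recording the sign constraints on $\sin\theta_0$ and $\sin(\theta_0-\pi\gamma)$ should show exactly that at least one $c_i<0$ is attainable and required. The $v_2$ branch, via \eqref{C2} with $c_1=\tfrac{2}{\lambda}\mathrm{Im}(z_0)$ and $c_2=\tfrac{2}{\lambda}(\pi-\mathrm{Im}(z_0))$ under the strip condition $\mathrm{Im}(z_0)<0$ or $\mathrm{Im}(z_0)>\pi$, contributes the remaining attainable pairs; I expect the two branches together to sweep out precisely $\{c_i<0 \text{ for some } i\}$. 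The analogous bookkeeping for $K=-1$ uses $\lambda\le|z_0|$, $\sin\alpha_0=\lambda/|z_0|$, $\pi\gamma<\theta_0-\alpha_0$ for $v_1$, and $\mathrm{Im}(z_0)<-\lambda$ or $>\pi+\lambda$ for $v_2$, and should yield the trichotomy $c_1<-2$, $c_2<-2$, or $c_1+c_2<0$.

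The main obstacle will be the $K=-1$ bookkeeping: one must carefully translate each geometric admissibility inequality into a sign/magnitude condition on $(c_1,c_2)$ and then verify that the \emph{union} of the attainable regions coming from $v_1$ and from $v_2$ is exactly the claimed set, with no gaps and no overflow. In particular I would pay attention to the boundary cases (equality in $\lambda\le|z_0|$, and the threshold $c_i=-2$ which is linked to the $K=-1$, ${\rm Re}\,\delta\neq0$ infinite-length situation noted after Theorem \ref{c1}) to confirm the strict inequalities are sharp. The $K=0$ and $K=1$ cases I expect to be routine once the parametrizations are set up.
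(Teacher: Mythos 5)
Your overall strategy is exactly the paper's own proof: by Theorem \ref{mainth1} together with Lemma \ref{lemacan}, existence of a finite-area solution of $(P)$ for given $(K,c_1,c_2)$ is equivalent to realizing $(c_1,c_2)$ through the formulas \eqref{C1}--\eqref{C2} within the admissible parameter ranges of Section 2, and the paper then carries out the same case-by-case sweep you outline (it sets $x=\theta_0$, $y=\theta_0-\pi\gamma$, $R_0=2r_0/\lambda$, so that $c_1=R_0\sin x$, $c_2=-R_0\sin y$, which makes the $K=0,-1$ bookkeeping and the converse constructions quite short). Your $K=1$ surjectivity check via the linear system in $(r_0/\lambda)\cos\theta_0$, $(r_0/\lambda)\sin\theta_0$ is a slightly more explicit version of what the paper asserts directly.

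There is, however, one concrete error that would derail the $K=0$ case as you have set it up. You state that for $v_1$ with $K=0$ admissibility forces ``$z_0=0$ (giving $c_1=c_2=0$, corresponding to a cone) or $\pi\gamma<\theta_0$''. The alternative ``$z_0=0$ or ($z_0\neq 0$ and $\pi\gamma<\theta_0$)'' is only the condition for $v_1$ to be \emph{well defined} on $\overline{\R_+^2}\setminus\{0\}$; the admissibility condition of Definition \ref{defican} is strictly stronger, namely $K\lambda^2+|z^\gamma-z_0|^2\neq 0$ for all $z\in\overline{\C^+}$ \emph{including} $z=0$, and it is precisely this stronger condition that guarantees finite area. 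For $K=0$ and $z_0=0$ one has $e^{v_1}=4\lambda^2\gamma^2|z|^{-2\gamma-2}$, whose integral over $\R_+^2$ diverges at the origin, so the flat cone is not a finite-area solution and must be excluded (the paper makes this point explicitly in Section 2). This is not a cosmetic issue: if $z_0=0$ were admissible, then $(c_1,c_2)=(0,0)$ would be attainable for $K=0$, contradicting the ``only if'' direction of the very statement you are proving, which for $K=0$ requires some $c_i<0$ strictly. The same phenomenon occurs at the boundary case $|z_0|=\lambda$ for $K=-1$, which you do flag but must likewise be excluded for the same infinite-area reason. Once you replace the well-definedness set by the admissibility set of Definition \ref{defican} in both cases, your plan goes through and coincides with the paper's proof.
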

\begin{proof}
It suffices to prove the result for canonical solutions. Consider
first of all a canonical solution given by \eqref{eqv1} and write
$z_0=r_0e^{i\theta_0}$. We already explained in Section 2 the
restrictions between the parameters appearing in \eqref{eqv1} for
the solution to be well defined. Also, the relationship between the
constants $c_i$ and these parameters is given in Lemma
\ref{lemacan}. From this, we see directly that there are always
canonical solutions with $K=1$ for every $c_1,c_2\in \R$, of the
form \eqref{eqv1}.

In the cases $K=0,-1$, the situation is more restrictive. Label
$x=\theta_0$, $y=\theta_0-\pi\gamma$ and $R_0=2r_0/\lambda$. From
Lemma \ref{lemacan} we see that \beq\label{c1c2}
c_1=R_0\sin(x)\qquad c_2=-R_0\sin(y). \eeq By our analysis in
Section 2, we have: \begin{itemize}
\item if $K=-1$, then $0<\alpha_0<y<x<2\pi-\alpha_0$ where $\alfa_0\in (0,\pi/2)$ and $R_0=2/\sin(\alfa_0)>2$. So, if $\alpha_0<y\leq\pi/2$ (resp. $3\pi/2\leq x<2\pi-\alpha_0$) we get $c_2<-2$ (resp. $c_1<-2$),
and if $\pi/2<  y<x< 3\pi/2$ we have $c_1+c_2<0$. Conversely, assume
that $c_1$ and $c_2$ satisfy the restrictions above, choose $R_0>2$
such that $c_1/R_0, c_2/R_0\in [-1,1]$, and call
$\sin(\alpha_0)=2/R_0$. Then, it is clear that some choices of
$x=\arcsin(c_1/R_0)$ and $y= \arcsin(-c_2/R_0)$ satisfy
$\alpha_0<y<x<2\pi-\alpha_0$. So we can find $z_0$ and $\lambda$ as
in Section 2, such that \eqref{C1} holds.
 \item If $K=0$, then $0<y<x<2\pi$. Thus, a simple analysis shows that
$c_1$ and $c_2$ can not be positive simultaneously. The converse is
analogous to the previous $K=-1$ case.
   \end{itemize}
Finally, assume that the canonical solution is given by
\eqref{eqv2}. Again, there are no restrictions if $K=1$. On the
other hand, from \eqref{C2} in the cases $K=0$ and $K=-1$, the
condition $c_1+c_2>0$ must be satisfied. Moreover, because of the
restriction we had for this kind of solutions, at least one $c_i$
must be strictly negative (strictly less than $-2$ in the case
$K=-1$). The converse is trivial.
\end{proof}
\section{Uniqueness of polygonal circular metrics}\label{secgeom}

Let us start by stating the following result, which follows from the
proof of Theorem \ref{c1} and the subsequent discussion.

\begin{corolario}\label{coruno}
Let $v\in C^2 (\overline{D_{\ep}^+}\setminus\{0\})$ be solution to
$(L)$ that satisfies the finite energy condition \eqref{localfinit},
and let $g:\overline{D_{\ep}^+}\setminus\{0\}\flecha
\Sigma_K\subseteq \overline{\C}$ denote its developing map, which is
a local diffeomorphism. Then:
 \begin{enumerate}
 \item
The image $g(I_{\ep}^+)$ lies on a circle $\cC_1$, and the image
$g(I_{\ep}^-)$ lies on another circle $\cC_2$, such that $\cC_1\cap
\cC_2\neq \emptyset$ (possibly $\cC_1=\cC_2$).
 \item
The geodesic curvature of $\cC_i$, when parametrized as $g(s,0)$, is
constant of value $-c_i/2$, for the metric $ds_K^2$ in
\eqref{metrik}.
 \item
$g$ extends continuously to the origin, with $g(0)\in \cC_1\cap
\cC_2\subset \overline{\C}$.
 \item
The Schwarzian map $Q=v_{zz}-v_z^2/2$ extends holomorphically to
$D_{\ep}^*$ with $Q(\bar{z})=\overline{Q(z)}$, and has at the origin
at most a pole of order two.
 \item
If $K=0$, then $g(0)\in \C$. If $K=-1$ and $g(0)\in \partial \D
\equiv \S^1$, then $\cC_1$ and $\cC_2$ are tangent at $g(0)$, and
are \emph{not} arcs of horocycles.
 \end{enumerate}
\end{corolario}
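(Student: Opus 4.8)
The plan is to recognize that \textbf{Corollary \ref{coruno}} is essentially a repackaging of facts already established during the proofs of Lemma \ref{mainlem} and Theorem \ref{c1}, together with the explicit case-analysis that followed Theorem \ref{c1}. So the strategy is to invoke those results and then extract each of the five items, filling in only the genuinely new content, which lives in item (5).

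First I would dispose of items (1)--(4), which are direct citations. Item (1) is precisely part (iii) of Lemma \ref{mainlem} applied to $I_\ep^+$ and $I_\ep^-$ separately, giving circles $\cC_1,\cC_2$; that these circles meet is guaranteed because Theorem \ref{c1} rules out case (iii) of Theorem \ref{t1} (the disjoint-circle case) under the finite-area hypothesis, so $\cC_1\cap\cC_2\neq\emptyset$, possibly with $\cC_1=\cC_2$. Item (2) restates the geometric interpretation of the Neumann condition: along $I_\ep^\pm$ the curve $g(s,0)$ has geodesic curvature $-c_i/2$ for $ds_K^2$, which is exactly how the constants $c_i$ entered in Lemma \ref{mainlem}(iii). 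Item (3) is the content of the arguments in Theorem \ref{c1} showing $\lim_{z\to0}g(z)=p_{i_0}\in\cC_1\cap\cC_2$ in cases (i) and (ii); I would simply point to that limit computation. Item (4) is Lemma \ref{mainlem}(i) for the holomorphic extension and Schwarzian reflection $Q(\bar z)=\overline{Q(z)}$, combined with the conclusion of Theorem \ref{c1} that $Q$ has at most a double pole at $0$.

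The real work is item (5). For the claim that $K=0\Rightarrow g(0)\in\C$: when $K=0$ the target space form is $\Sigma_0=\C$, and $g$ maps into $\C$, so I must argue $g(0)$ is a finite point rather than $\infty$. This follows from finite area: if $g(0)=\infty$, then near the origin $g$ would leave every compact subset of $\C$, and the pullback metric $4|g'|^2|dz|^2$ would accumulate infinite area near $0$; concretely this matches the cases in the post-Theorem \ref{c1} discussion where $|D|^2+K|B|^2=0$ or $|C|^2+K|A|^2=0$ forced $\int_{D_\ep^+}e^v=\infty$, so finiteness of area excludes $g(0)=\infty$. For the $K=-1$ tangency statement: if $g(0)\in\partial\D\equiv\S^1$, then the limit point lies on the ideal boundary of the hyperbolic plane. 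Since $\cC_1,\cC_2$ both pass through $g(0)$, I would show they cannot meet transversally at an ideal point while still bounding a finite-area region — a transversal crossing at an ideal boundary point would open an angular sector carrying infinite hyperbolic area. Hence $\cC_1$ and $\cC_2$ must be tangent at $g(0)$, which geometrically corresponds exactly to case (ii) of Theorem \ref{t1} (the one-point-intersection, hence $F+\log z$ developing map) specialized to $|A|=|C|$, the situation analyzed in detail after Theorem \ref{c1}. Finally, to exclude horocycles, I would recall that in the finite-area subcase of (ii) with $K=-1$ the surviving solutions required ${\rm Re}\,\delta\neq0$ and yielded $c_1=-c_2\in(-2,2)$; a horocyclic arc has constant geodesic curvature exactly $1$ (i.e. $c_i=\mp2$), which is excluded by the strict inequality $|c_i|<2$.

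I expect the main obstacle to be item (5), and within it the precise justification that tangency (rather than transversality) of $\cC_1,\cC_2$ at an ideal point is forced by finite area, and the clean identification of horocycles through their geodesic-curvature value $1$ corresponding to the boundary case $c_i=\pm2$ that the strict bound $c_1=-c_2\in(-2,2)$ rules out. The rest is bookkeeping: matching each geometric configuration to the algebraic normalization ($|A|=|C|$, the sign of ${\rm Re}\,\delta$) already carried out in the analysis following Theorem \ref{c1}, so I would phrase (5) as a translation of that analysis into the intrinsic language of the space form $\cQ^2(-1)=\D$.
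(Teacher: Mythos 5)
Your proposal is correct and is essentially the paper's own (implicit) proof: the paper states Corollary \ref{coruno} as following directly from Lemma \ref{mainlem}, Theorem \ref{c1}, and the case-by-case finite-area discussion after Theorem \ref{c1}, which is exactly how you handle items (1)--(4) and the bookkeeping in item (5). The one place you genuinely diverge is the tangency claim in item (5): you argue geometrically that a transversal crossing of $\cC_1$ and $\cC_2$ at a point of $\S^1$ would open a wedge of infinite hyperbolic area, whereas the paper gets tangency for free from its classification --- under finite area, $g(0)\in\S^1$ with $K=-1$ can only occur in case (ii) of Theorem \ref{t1} (in case (i) it would force $|D|^2=|B|^2$ or $|C|^2=|A|^2$, precisely the subcases whose estimates after Theorem \ref{c1} give $\int e^v=\infty$), and case (ii) means $\cC_1\cap\cC_2$ is a single point, so the two distinct circles are automatically tangent there. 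Be aware that your wedge argument, taken as the primary justification, has a rigor gap: finite area controls the area of the image of $g$ \emph{counted with multiplicity}, so you would still need a covering/degree argument showing that the image of a punctured half-disk at $0$ actually sweeps the wedge between the two arcs near $g(0)$; since you ultimately anchor item (5) to the algebraic normalization ($|A|=|C|$, ${\rm Re}\,\delta\neq 0$) already worked out after Theorem \ref{c1}, your proof goes through as the paper intends. Your horocycle exclusion --- the surviving subcase gives $c_1=-c_2\in(-2,2)$, hence geodesic curvature $-c_i/2\in(-1,1)$, while a horocycle has curvature $\pm 1$ --- is exactly the intended reading of the paper's computation.
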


From this result, it is not difficult to classify the conformal
metrics of constant curvature $K$ and finite area on $\R_+^2$ that
have a finite number of boundary singularities on the real axis, and
constant geodesic curvature along each boundary arc. From an
analytical point of view, this corresponds to classifying the
solutions $v\in C^2 (\overline{\R_+^2}\setminus\{q_1,\dots,
q_{n-1}\})$ with $\int_{\R_+^2} e^v<\8$ to the Neumann problem

\begin{equation}\label{+s}
\left\{
\begin{array}{lll}
\Delta v+2 K e^v=0&\qquad\qquad&\text{in }\r^2_+=\{(s,t)\in\r^2:\ t>0\},\\[3mm]
{\displaystyle \frac{\partial v}{\partial t}}=c_j\, e^{v/2}&&\text{on } I_j\subset \R\equiv \partial\r^2_+, \hspace{1cm} c_j\in \R,\\[3mm]
\end{array}
\right.\end{equation} where $I_j:=(q_j,q_{j+1})$, $j=0,\dots, n-1$
and $q_0=-\8<q_1<\dots <q_{n-1}<q_{n}=\8$.

There are obvious examples of this type of conformal metrics on
$\R_+^2$. To see this, we denote $$\tilde{\Sigma_K}=\left\{\begin{array}{ll}\Sigma_K & \text{ if}\ K=1,0,\\
&\\
\overline{\Sigma_K}\equiv \overline{\D} & \text{ if}\
K=-1,\end{array}\right.$$ and we consider a polygon in
$\tilde{\Sigma_K}\subseteq\overline{\C}$ whose edges are circular
arcs, and a conformal mapping from $\R_+^2$ into the region bounded
by it (there are two such regions if
$\tilde{\Sigma_K}=\overline{\C}$).  In the case that $K=-1$ we will
allow that these polygons have some vertices at $\S^1\equiv \partial
\tilde{\Sigma_{-1}}$, as long as the edges common to any of such
vertices are tangent at the vertex, and are not pieces of
horocycles. Then, the induced metric on $\R_+^2$ from $ds_K^2$ via
this conformal mapping gives a metric in the above conditions. (That
the area is finite when $K=-1$ and the polygon has ideal vertices is
proved in the discussion after Theorem \ref{c1}).

Also, in the case $K=1$ (and so $\tilde{\Sigma_K}=\overline{\C}$) we
may compose with a suitable branched covering of $\overline{\C}$ to
obtain other conformal metrics with the desired properties, as we
explained in Subsection \ref{dosdos}.

Still, there exist many other conformal metrics on $\R_+^2$ of
finite area, constant geodesic curvature on the boundary, and a
finite number of boundary singularities. In order to explain how to
construct them, we give first some definitions.

\begin{definicion}
By a \emph{piecewise regular closed curve} in $\tilde{\Sigma_K}$ we
mean a continuous map $\alfa:\S^1\flecha \tilde{\Sigma_K}$ such that
$\alfa$ is smooth and regular everywhere except at a finite number
of points $\theta_1,\dots, \theta_n \in \S^1$. By a \emph{piecewise
regular parametrization} of $\alfa$ we mean a composition
$\beta=\alfa\circ \phi:\S^1\flecha \tilde{\Sigma_K}$, where $\phi$
is a diffeomorphism of $\S^1$.

Let $A_j\subset \S^1$, $j\in \{1,\dots, n\}$, be the arc between
$\theta_j$ and $\theta_{j+1}$ (we define
$\theta_{n+1}:=\theta_{1}$). Then $\alfa$ will be called an
\emph{immersed circular polygon} in $\tilde{\Sigma_K}$ if each
regular open arc $\alfa|_{A_j}$ has constant geodesic curvature in
$\Sigma_K$, and in the case that $K=-1$ and $\alfa (\theta_j)\in
\S^1$ the arcs $\alfa|_{A_j}$ and $\alfa|_{A_{j-1}}$ are tangent at
$\alfa(\theta_j)$ and are not pieces of horocycles.
\end{definicion}
Observe that we allow the curve $\alfa$ to have self-intersections,
even along each regular arc $A_j\subset \S^1$. We now introduce a
concept from differential topology.

\begin{definicion}
A piecewise regular closed curve $\alfa:\S^1\flecha
\tilde{\Sigma_K}$ is \emph{Alexandrov embedded} (or simply
\emph{A-embedded}) if there exists a continuous map
$G:\overline{\D}\flecha \tilde{\Sigma_K}$ such that $G\in
C^2(\overline{\D}\setminus \{p_1,\dots,p_n\})$ for some
$p_1,\dots,p_n\in \S^1$, and:
 \begin{enumerate}
\item
For every $z\in \D$ it holds that $G(z)\in \Sigma_K$ and $G$ is a
local diffeomorphism around $z$.
 \item
$G|_{\S^1}:\S^1\flecha \tilde{\Sigma_K}$ is a piecewise regular
parametrization of $\alfa$.
 \end{enumerate}
\end{definicion}

\begin{ejemplo}
Any circular polygon without self-intersections in
$\tilde{\Sigma_K}$ is A-embedded. Also, given two points $p,q\in
\C$, if $\gamma_1$ (resp. $\gamma_2$) is an oriented geodesic arc
from $p$ to $q$ (resp. $q$ to $p$), then $\gamma_1 \cup \gamma_2$ is
a circular polygon, which is not A-embedded in $\C$, but it is
A-embedded in $\overline{\C}$. Two further examples are given in
Figure 1.
\end{ejemplo}

\begin{figure}[h]
  \begin{center}
  \begin{tabular}{cc}
   \includegraphics[clip,width=5cm]{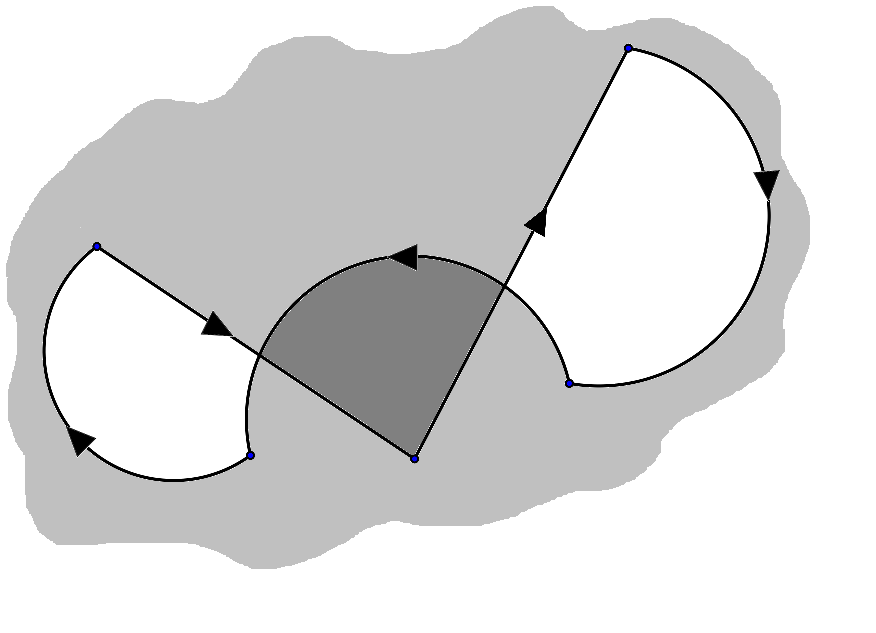} & \includegraphics[clip,width=5cm]{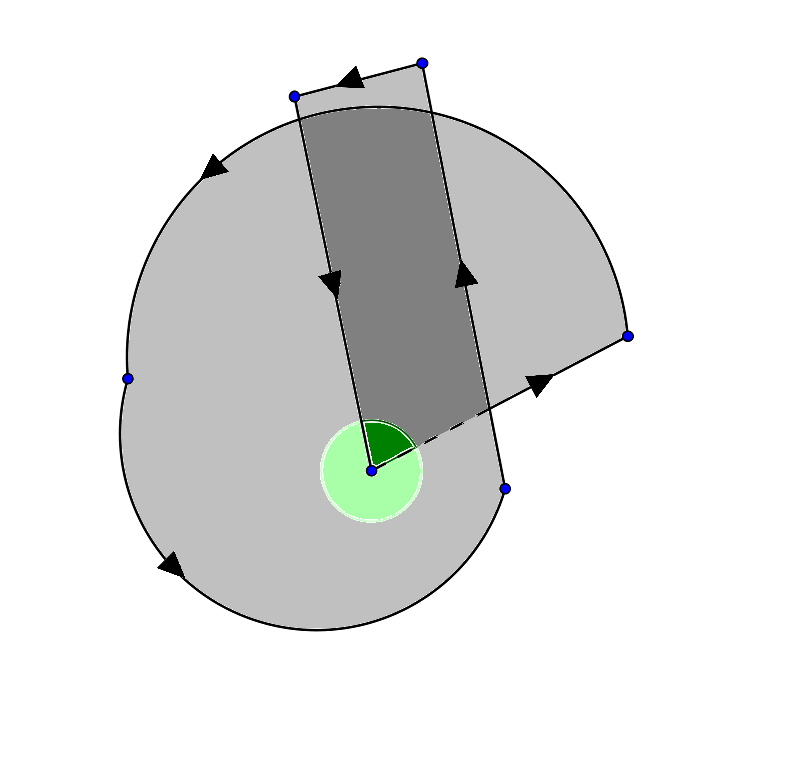}
   \end{tabular}
\caption{Two examples of circular polygonal contours that are not
embedded. The first one is A-embedded in $\overline{\c}$ but not
A-embedded in $\C$. The second one is A-embedded in $\C$ and
$\overline{\c}$. The green circle indicates that the angle at that
vertex is greater than $2\pi$.}
 \end{center}
\end{figure}

We can now associate a conformal metric of constant curvature $K$ in
$\R_+^2$ to any immersed circular polygon that is A-embedded.
Indeed, let $d\sigma^2$ denote the metric $d\sigma^2 =G^* (ds_K^2)$
induced on $\Gamma:=\overline{\D}\setminus\{p_1,\dots, p_n\}$. It is
then clear that $\Gamma$ with the complex structure induced by
$d\sigma^2$ is conformally equivalent to
$\overline{\D}\setminus\{\theta_1,\dots,\theta_n\}$ for some
$\theta_1,\dots,\theta_n\in \S^1$, or alternatively, to
$\overline{\R_+^2}\setminus\{q_1,\dots, q_{n-1}\}$ for some
$q_1<\cdots<q_{n-1}$. Consequently, $d\sigma^2$ produces on $\R_+^2$
a conformal metric $ds^2=e^v |dz|^2$ on
$\overline{\R_+^2}\setminus\{q_1,\dots, q_{n-1}\}$ that has finite
area, constant geodesic curvature on each boundary component, and
$n-1$ boundary singularities along the real axis (we have $n$
singularities if we also count the one placed at $\8$).

\begin{definicion}\label{circo}
Any such metric $ds^2$ on $\R_+^2$ will be called a \emph{circular
polygonal metric} on $\R_+^2$.
\end{definicion}

\begin{nota}
The metrics on $\R_+^2$ that we considered after equation
\eqref{+s}, starting from a polygon in $\tilde{\Sigma_K}$ whose
edges are circle arcs, are examples of circular polygonal metrics.
In that situation, the map $G$ is given by an adequate conformal
equivalence from $\D$ into the region bounded by this polygon. Note
that if $K=1$, the freedom of composing with suitable branched
coverings of $\overline{\C}$ only gives different choices of $G$
associated to the Alexandrov embedded polygonal boundary. Thus, even
the metrics involving such branched coverings are trivially circular
polygonal metrics as in Definition \ref{circo}.
\end{nota}

Once here, we have the following consequence of Corollary
\ref{coruno}:

\begin{corolario}\label{cordos}
Let $ds^2=e^v |dz|^2$ be a conformal metric of constant curvature
$K$ and finite area in $\R_+^2$. Assume that $ds^2$ extends smoothly
to $\overline{\R_+^2}\setminus \{q_1,\dots,q_{n-1}\}$ for some
$q_1<\cdots < q_{n-1} \in \R$, so that the geodesic curvature of
each boundary arc in $\R\equiv \parc \R_+^2$ is constant. Then
$ds^2$ is a circular polygonal metric in $\R_+^2$.
\end{corolario}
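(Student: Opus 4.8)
The plan is to realize the given metric directly as the pullback of $ds_K^2$ under a global developing map, reinterpreted on the unit disk, and then to recognize that this developing map is precisely the certifying map in the definition of an Alexandrov embedded immersed circular polygon. Since $\R_+^2$ is simply connected, Theorem \ref{repth} yields a locally univalent meromorphic function $g:\R_+^2\flecha \Sigma_K\subseteq\overline{\C}$ (holomorphic with $1+K|g|^2>0$ if $K\leq 0$) for which $v$ is given by \eqref{rep}; by Remark \ref{remiso}, $g$ is a local isometry from $(\R_+^2,e^v|dz|^2)$ onto an open subset of $\cQ^2(K)$. Fixing a conformal equivalence $\Phi_0:\D\flecha\R_+^2$ (a M\"obius map carrying $\S^1$ onto $\R\cup\{\8\}$), I would set $G:=g\circ\Phi_0:\D\flecha\Sigma_K$. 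Under $\Phi_0$ the boundary singularities $q_1,\dots,q_{n-1}$ together with $\8$ correspond to $n$ points $\theta_1,\dots,\theta_n\in\S^1$, and $G$ is a local diffeomorphism of $\D$ into $\Sigma_K$ because $g$ is locally univalent and $1+K|g|^2>0$.

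Next I would extend $G$ continuously to $\overline{\D}$. On each regular boundary arc $A_j\subset\S^1$, corresponding under $\Phi_0^{-1}$ to an interval $(q_j,q_{j+1})$, Lemma \ref{mainlem}(ii) shows that $g$, hence $G$, extends across the boundary as a locally univalent meromorphic map, while Lemma \ref{mainlem}(iii) shows that $G|_{A_j}$ is a regular arc of a circle in $\overline{\C}$ whose geodesic curvature for $ds_K^2$ is the constant $-c_j/2$. At each singular point I would invoke Corollary \ref{coruno}: near a finite $q_j$ it applies after translating $q_j$ to the origin, and at $\8$ it applies to the developing map of the solution $\widehat v$ obtained through the inversion $z\mapsto -1/z$, whose finite area is preserved, $\int e^{\widehat v}=\int e^v<\8$, exactly as in the proof of Theorem \ref{mainth1}. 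Corollary \ref{coruno} then gives that $g$, and so $G$, extends continuously to each $\theta_j$ with $G(\theta_j)\in\tilde{\Sigma_K}$ lying at the intersection of the two adjacent boundary circles. Altogether $G$ is continuous on $\overline{\D}$ and $G\in C^2(\overline{\D}\setminus\{\theta_1,\dots,\theta_n\})$.

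Finally I would read off the required structure. The restriction $\alpha:=G|_{\S^1}:\S^1\flecha\tilde{\Sigma_K}$ is a piecewise regular closed curve whose regular arcs are circular arcs of constant geodesic curvature, hence an immersed circular polygon: when $K=0$ the vertices satisfy $\alpha(\theta_j)\in\C$, and when $K=-1$ Corollary \ref{coruno}(5) guarantees that two arcs meeting at an ideal vertex $\alpha(\theta_j)\in\S^1$ are tangent there and are not arcs of horocycles. The map $G$ itself witnesses that $\alpha$ is Alexandrov embedded, being a continuous map $\overline{\D}\flecha\tilde{\Sigma_K}$ in $C^2(\overline{\D}\setminus\{\theta_j\})$ that is a local diffeomorphism into $\Sigma_K$ on $\D$ and restricts on $\S^1$ to a piecewise regular parametrization of $\alpha$. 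Transporting the circular polygonal metric $G^*(ds_K^2)$ from $\overline{\D}\setminus\{\theta_j\}$ to $\overline{\R_+^2}\setminus\{q_j\}$ via $\Phi_0$ gives back, by the local isometry property of $g$, exactly $e^v|dz|^2=ds^2$; so $ds^2$ is a circular polygonal metric in the sense of Definition \ref{circo}. I expect the only genuinely delicate point to be the uniform treatment of the vertex at $\8$ and the checking that the local continuous extensions supplied by Corollary \ref{coruno} patch into a single continuous $G$ on all of $\overline{\D}$; everything else is a direct translation of Corollary \ref{coruno} into the language of Definition \ref{circo}.
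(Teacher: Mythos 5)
Your proposal is correct and takes essentially the same approach as the paper's own proof: both define $G:=g\circ\Psi$ on $\overline{\D}$ for a conformal equivalence $\Psi:\D\flecha\C_+$, use Corollary \ref{coruno} (together with Lemma \ref{mainlem} along the regular arcs) to extend $G$ continuously to $\overline{\D}$ and recognize $G|_{\S^1}$ as a piecewise regular parametrization of an Alexandrov-embedded immersed circular polygon, and conclude via the local isometry property of the developing map that $ds^2=G^*(ds_K^2)$ transported back to $\R_+^2$ is the original metric. Your explicit treatment of the vertex at $\8$ through the inversion $z\mapsto -1/z$ (borrowed from the proof of Theorem \ref{mainth1}) simply fills in a step the paper leaves implicit when it applies Corollary \ref{coruno} at all $n$ vertices.
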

\begin{proof}
Let $g:\R_+^2\equiv \C_+\flecha \Sigma_K\subseteq\overline{\C}$ be
the developing map of $v$, let $\Psi:\D\flecha \C_+$ be a Möbius
transformation giving a conformal equivalence, and define $G:=g\circ
\Psi:\D\flecha \Sigma_K$. Then, by Corollary \ref{coruno} $G$
extends continuously to $\overline{\D}$, and is a local
diffeomorphism around each $z\in \D$. Again by Corollary
\ref{coruno}, it is clear that $G|_{\S^1}$ is a piecewise regular
parametrization of an immersed circular polygon $\alfa$ in
$\tilde{\Sigma_K}$, and that $G\in C^2
(\overline{\D}\setminus\{p_1,\dots, p_n\})$, where the points $p_j
\in \S^1$ are given by $\Psi (p_j)=q_j$ for $j=1,\dots, n-1$, and
$\Psi(p_n)=\8$. Hence, $\alfa$ is Alexandrov-embedded.

Finally, since $g$ is a local isometry (see Remark \ref{remiso}), we
conclude that $ds^2=e^v |dz|^2$ is indeed a circular polygonal
metric on $\R_+^2$.
\end{proof}

\begin{nota}
Corollary \ref{cordos} together with Theorem \ref{mainth1} explain
the geometric interpretation of the canonical solutions that we
pointed out without proof in Subsection \ref{dosdos}.
\end{nota}

Corollary \ref{cordos} provides a satisfactory geometric
classification of all the finite area solutions to problem
\eqref{+s}. Let us now describe such solutions from an analytic
point of view, using for that Corollary \ref{coruno} and some
classical arguments of the conformal mapping problem from the upper
half-plane to a circular polygonal domain in $\C$, see \cite{Neh}
for instance. We shall focus on the $K=1$ case, although many of the
next statements also hold when $K\leq 0$.

\begin{corolario}\label{corfor}
Let $v\in C^2(\overline{\R_+^2}\setminus\{q_1,\dots, q_{n-1}\})$ be
a solution to \eqref{+s} for $K=1$ that satisfies $\int_{\R_+^2} e^v
<\8$. Then the Schwarzian map $Q=v_{zz}-v_z^2/2$ of $v$ is given by
 \begin{equation}\label{qmuchos}
Q=\sum_{i=1}^{n-1} \left(\frac{\alfa_i}{(z-q_i)^2} +
\frac{\beta_i}{z-q_i}\right)
 \end{equation}
where $\alfa_i,\beta_i \in \R$ with $\alfa_i\leq 1/2$, $i=1,\dots,
n-1$, satisfy the following conditions:
 \begin{equation}\label{adcond} \sum_{i=1}^{n-1} \beta_i =0,
\hspace{1cm} \sum_{i=1}^{n-1} (\alfa_i + q_i \beta_i) \leq 1/2.
 \end{equation}
Conversely, if $Q$ is as in \eqref{qmuchos}-\eqref{adcond}, then
there is a solution $v\in C^2(\overline{\R_+^2}\setminus\{q_1,\dots,
q_{n-1}\})$ to problem $\eqref{+s}$ for $K=1$ that satisfies
$\int_{\R_+^2} e^v <\8$. Moreover, the family of such solutions with
the same $Q$ is generically three-dimensional.
\end{corolario}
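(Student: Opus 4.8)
The plan is to prove the two directions separately, both resting on the local description in Corollary \ref{coruno}, and then to read off the dimension of the solution space from the M\"obius freedom in the developing map.

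\textbf{Form of $Q$ (forward direction).} First I would record that the Schwarzian map $Q=\{g,z\}$ of a solution $v$ is holomorphic in $\R_+^2$, and by Corollary \ref{coruno} (the reflection $Q(\bar z)=\overline{Q(z)}$ together with part (4)) it extends holomorphically across each open boundary arc $I_j$, where it is real. Hence $Q$ is holomorphic on $\C\setminus\{q_1,\dots,q_{n-1}\}$, and by Corollary \ref{coruno}(4) has at each $q_i$ a pole of order at most two. Since $q_i\in\R$ and $Q(\bar z)=\overline{Q(z)}$, the Laurent coefficients at $q_i$ are real, so the principal part is $\alfa_i/(z-q_i)^2+\beta_i/(z-q_i)$ with $\alfa_i,\beta_i\in\R$. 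To handle infinity I would use the inversion $z=-1/w$ exactly as in the proof of Theorem \ref{mainth1}: $h(w)=g(-1/w)$ is again a developing map of a finite--area solution, so its Schwarzian $\widehat Q(w)=w^{-4}Q(-1/w)$ has at most a double pole at $w=0$, equivalently $Q(z)=O(1/z^2)$ as $z\to\8$. Subtracting the principal parts from $Q$ leaves an entire function vanishing at infinity, which is $0$ by Liouville's theorem; this gives \eqref{qmuchos}.

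\textbf{The constraints.} The bound $\alfa_i\le 1/2$ is local: finite area excludes case (iii) of Theorem \ref{t1} (the two boundary circles must meet), so near $q_i$ the developing map is of type (i) or (ii) with a \emph{real} local exponent. A direct computation of the Schwarzian of $z^{s}F$ with $F$ meromorphic and $s\in\R$ gives leading coefficient $\alfa_i=(1-s^2)/2\le 1/2$, the value $1/2$ occurring exactly in the logarithmic case (ii). For the global identities I would expand \eqref{qmuchos} at infinity: the $1/z$ coefficient is $\sum_i\beta_i$ and must vanish because $Q=O(1/z^2)$, giving $\sum_i\beta_i=0$; the $1/z^2$ coefficient is $\sum_i(\alfa_i+q_i\beta_i)$, which under $z=-1/w$ is precisely the leading double--pole coefficient of $\widehat Q$ at $w=0$, i.e. the Schwarzian coefficient of the boundary singularity at infinity, hence $\le 1/2$ by the same local bound. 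This is the second inequality in \eqref{adcond}.

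\textbf{Converse.} Given $Q$ as in \eqref{qmuchos}--\eqref{adcond}, I would solve $\{g,z\}=Q$ on the simply connected domain $\R_+^2$ to get a single--valued locally univalent meromorphic $g$, and set $v$ by \eqref{rep} with $K=1$ (no positivity restriction is needed since $\Sigma_1=\overline{\C}$); then $v$ solves the Liouville equation by Theorem \ref{repth}. As $Q$ has real coefficients it is real on each $I_j$, so $g^{\ast}(z):=\overline{g(\bar z)}$ solves the same Schwarzian equation and thus $g^{\ast}=\psi\circ g$ for a M\"obius $\psi$; evaluating on $I_j$ gives $\bar\zeta=\psi(\zeta)$ for $\zeta\in g(I_j)$, whose solution locus is a circle $\cC_j$, so $g(I_j)\subset\cC_j$, $I_j$ has constant geodesic curvature for $e^v|dz|^2$, and $v$ satisfies the Neumann condition with a real $c_j$. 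For finite area I would invoke the analysis following Theorem \ref{c1}: with $\alfa_i\le 1/2$ the exponents at $q_i$ are real and $g$ is of type (i) or (ii), and for $K=1$ the degenerate subcases there cannot occur (they would force $|B|^2+|D|^2=0$ or $|A|^2+|C|^2=0$, impossible since $AD-BC=1$), so the area is finite near each $q_i$; the identities $\sum_i\beta_i=0$ and $\sum_i(\alfa_i+q_i\beta_i)\le 1/2$ give the same conclusion at infinity via $z=-1/w$, and the metric is smooth on compacts of $\overline{\R_+^2}\setminus\{q_i\}$, so $\int_{\R_+^2}e^v<\8$.

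\textbf{Dimension count and main obstacle.} Finally I would identify the solutions with a fixed $Q$. Their developing maps are exactly $g=T\circ g_0$, $T\in\mathrm{PSL}(2,\C)$, for one fixed $g_0$ with $\{g_0,z\}=Q$; each such $g$ still sends every $I_j$ onto a circle $T(\cC_j)$ (so it solves some problem \eqref{+s}) and, by the previous paragraph, still has finite area when $K=1$. Two of them give the same metric precisely when they differ by an isometry of $(\overline{\C},ds_1^2)$, i.e. $T'T^{-1}\in\mathrm{PSU}(2)$; this uses that $g_0$ is a nonconstant local diffeomorphism, so equality of the pulled--back metrics forces equality of the metrics on $\overline{\C}$ by real--analyticity. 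Hence the family of solutions with the given $Q$ is $\mathrm{PSL}(2,\C)/\mathrm{PSU}(2)$, of real dimension $6-3=3$, the word ``generically'' absorbing the degenerate $Q$ for which this orbit fails to be a genuine three--parameter family of admissible solutions (e.g.\ where an induced boundary circle degenerates). I expect the main obstacle to lie in the converse rather than in the representation of $Q$: one must check uniformly that the abstractly constructed $g$ really lands each $I_j$ on a circle and yields a \emph{globally} finite--area metric, carefully merging the local pictures at the finite $q_i$, at infinity, and on the regular part, and then justify the $\mathrm{PSU}(2)$ identification underlying the dimension count.
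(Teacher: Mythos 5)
Your proposal is correct and follows essentially the same route as the paper: Corollary \ref{coruno} for the pole structure of $Q$, the inversion $w=-1/z$ to force $p=0$ and the conditions \eqref{adcond} at infinity, the local analysis after Theorem \ref{c1} (with the degenerate subcases ruled out by $AD-BC=1$ when $K=1$) for finite area in the converse, and the M\"obius-modulo-isometry count ($6-3=3$) for the dimension. The only step the paper makes explicit that you leave implicit is the classical Frobenius argument for the ODE $y''+\tfrac{1}{2}Qy=0$ with indicial equation $\landa^2-\landa+\alfa_i/2=0$, which is what justifies your assertion that near each $q_i$ the solutions of $\{g,z\}=Q$ have real exponents and are of type (i)/(ii) of Theorem \ref{t1} with $F$ meromorphic rather than essentially singular.
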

\begin{proof}
Let $v$ be a solution to problem \eqref{+s} satisfying
$\int_{\R_+^2} e^v <\8$. By Corollary \ref{coruno}, the Schwarzian
map $Q$ of $v$ is holomorphic on $\C\setminus \{q_1,\dots,
q_{n-1}\}$, and has at most a pole of order two at each $q_i$. So,
clearly $Q$ is of the form
 \begin{equation*}
Q=\sum_{i=1}^{n-1} \left(\frac{\alfa_i}{(z-q_i)^2} +
\frac{\beta_i}{z-q_i}\right) + p(z)
 \end{equation*}
for some $\alfa_i,\beta_i\in \R$, $i=1,\dots, n-1$, and for some
polynomial $p(z)$ with real coefficients.

Let now $g:\C_+\flecha \overline{\C}$ denote the developing map of
$v$, which satisfies $\{g,z\}=Q$. As $v$ has finite area around each
$q_i$, by Theorem \ref{c1} we know that $g$ is a Möbius
transformation of a function of the form $(z-q_i)^{\landa} F(z)$ or
$F(z)+\log (z-q_i)$ near each $q_i$, where $F$ is holomorphic on a
punctured neighborhood of $q_i$, and has at worst a pole at $q_i$.
Noting that the Scwharzian derivative is invariant by Möbius
transformations, a simple computation shows that the coefficient
$\alfa_i$ in \eqref{qmuchos} satisfies $\alfa_i\leq 1/2$.

The rest of restrictions come from the fact that, by finite area,
the holomorphic quadratic differential $Qdz^2$ has at $\8$ at most a
pole of order two. If we let $w=-1/z$, then by conformal invariance
$Q(z) dz^2 = \tilde{Q}(w) dw^2$ where
$$\tilde{Q}(w)=\frac{1}{w^4} Q(-1/w).$$ So, again by Theorem \ref{c1} and the previous computation,
the finite area condition at infinity is that there exists
$\lim_{w\to 0} w^2 \tilde{Q}(w) = \alfa_n$ for some $\alfa_n\in
(-\8,1/2]$. By computing the first terms in the Taylor expansion of
$Q(-1/w)$, we easily see that this happens if and only if $p=0$ and
$\alfa_i,\beta_i$ satisfy the conditions \eqref{adcond}. This
completes the first part of the proof.

Conversely, let $Q$ be as in \eqref{qmuchos}-\eqref{adcond}, and let
$g$ be a solution to $\{g,z\}=Q$ in $\C_+$. By construction, $g$ is
a locally injective meromorphic function on $\C_+$, unique up to
Möbius transformations, and which extends smoothly to
$\overline{\C_+}\setminus\{q_1,\dots, q_{n-1},\8\}$. As $Q$ is real
on the real axis, we deduce from the equation $\{g,s\}=Q(s)$ on $\R$
that $g(s)$ lies on a circle in $\overline{\C}$ for each interval in
$\R\setminus \{q_1,\dots, q_{n-1}\}$. All of this shows that the map
$v\in C^2(\overline{\R_+^2}\setminus\{q_1,\dots,q_{n-1}\})$ given by
$$e^v=\frac{4|g'|^2}{(1+|g|^2)^2}$$ is a solution to \eqref{+s} for
$K=1$. We only have left to show that $\int_{\R_+^2} e^v <\8$, for
what we only need to prove this condition around each $q_i$ and
around $\8$.

Let us fix $q_i$, $i\in \{1,\dots, n-1\}$, and consider the complex
ODE $y''+ \frac{1}{2} Q y =0$. As $Q$ has at worst a pole of order
two at $q_i$, it is a classical result that a fundamental system of
solutions $(y_1,y_2)$ of this equation around $q_i$ is
$$y_1(z)= (z-q_i)^{\landa_1} \, a_1(z), \hspace{1cm} y_2(z)=
(z-q_i)^{\landa_2} \, a_2(z)+ k y_1(z) \log (z-q_i),$$ where $k\in
\C$, $a_1(z),a_2(z)$ are holomorphic on a neighborhood of $q_i$ with
$a_i(0)\neq 0$ for $i=1,2$, and $\landa_1,\landa_2$ are solutions of
the \emph{indicial equation} $$\landa^2-\landa + \frac{\alfa_i}{2}
=0.$$ Here $\alfa_i$ is the coefficient of $Q$ in $q_i$ given by
\eqref{qmuchos}. Note that from $\alfa_i\leq 1/2$ we deduce that
$\landa_1,\landa_2\in \R$, and we may assume that $\landa_1\leq
\landa_2$. Therefore, $k\neq 0$ if and only if $\landa_2-\landa_1
\in \N$.

Also, it is a classical result that the quotient $y_2/y_1$ provides
a solution to $\{g,z\}=Q$, that is,  a developing map for the
solution $v$. Thus, depending on whether $\landa_2-\landa_1\in \N$
or not, $g$ is of the form
$$g(z)= F(z)+ \log (z-q_i) \hspace{1cm} \text{ or }
\hspace{1cm} g(z)=(z-q_i)^{\landa_2-\landa_1} F(z) $$ for some
meromorphic function $F$ around $q_i$ such that
$F(\bar{z})=\overline{F(z)}$. By Theorem \ref{c1} and its subsequent
discussion, we see then that $\int e^v <\8$ on the half-disk
$D^+(q_i,\ep)\subset \R_+^2$ for $\ep>0$ small enough (note that we
are assuming that $K=1$).

The same argument can be done at $\8$, this time using the
additional conditions \eqref{adcond} and the conformal change
$w=-1/z$, as we did before. This concludes the proof of existence.

Finally, observe that the solution $g$ to $\{g,z\}=Q$ is unique up
to Möbius transformations, so there is a real $6$-parameter family
of possible choices for $g$. As the developing map of a solution $v$
to \eqref{lio} is defined up to the change \eqref{isom}, we obtain
generically a $3$-parameter family of solutions to \eqref{+s} for
$K=1$ with the same $Q$. This concludes the proof of the result.

\end{proof}

\def\refname{References}


\begin{thebibliography}{9}

\bibitem[BHL]{BHL} F. Brito, J. Hounie, M.L. Leite, Liouville's formula in arbitrary planar domains, {\it Nonlinear Anal.} {\bf 60} (2005), 1287--1302.

\bibitem[Bry]{Bry} R.L. Bryant, Surfaces of mean curvature one in
hyperbolic space, {\it Astérisque}, {\bf 154-155} (1987), 321--347.


\bibitem[ChLi]{ChLi} W. Chen, C. Li, Classification of solutions of some nonlinear elliptic equations, {\it Duke Math. J.}, {\bf 63} (1991), 615--622.

\bibitem[ChWa]{ChWa} K.S. Chou, T. Wan, Asymptotic radial symmetry for solutions of $\Delta u + e^u=0$ in a punctured disk, {\it Pacific J. Math.} {\bf 163} (1994), 269--276.

\bibitem[CSF]{CSF} M. Chipot, I. Shafir, M. Fila, On the solutions to some elliptic equations with nonlinear Neumann boundary conditions, {\it Adv. Differential Equations} {\bf 1} (1996), 91--110.

\bibitem[GaMi]{GaMi} J.A. Gálvez, P. Mira, The Liouville equation in a half-plane, {\it J. Differential
Equations}, {\bf 246} (2009), 4173--4187.

\bibitem[HaWa]{HaWa} F. Hang, X. Wang, A new approach to some nonlinear geometric equations in dimension two, {\it Calc. Var. Partial Diff. Equations}, {\bf 26} (2006), 119--135.

\bibitem[Hei]{Hei} M. Heins, On a class of conformal metrics, {\it Nagoya Math.
J.} {\bf 21} (1962), 1--60.


\bibitem[JWZ]{JWZ} J. Jost, G. Wang, C. Zhou, Metrics of constant curvature on a Riemann surface with two corners on the boundary, {\it Ann. Inst. H. Poincaré}, to appear.

\bibitem[Li]{Li} J. Liouville, Sur l'equation aux differences partielles
$\frac{\partial^2 \log \lambda}{\partial u\partial v} \pm
\frac{\lambda}{2a^2} =0$, {\it J. Math. Pures Appl.} {\bf 36}
(1853), 71--72.


\bibitem[LiZha]{LiZha} Y.Y. Li, L. Zhang, Liouville-type theorems and Harnack-type inequalities for semilinear elliptic equations, {\it J. Anal. Math.} {\bf 90} (2003), 27--87.

\bibitem[LiZh]{LiZh} Y.Y. Li, M. Zhu, Uniqueness theorems through the method of moving spheres, {\it Duke Math. J.} {\bf 80} (1995), 382--417.

\bibitem[Ma]{Ma} F. Marty, Recherches sur le répartition des valeurs d'une fonction méromorphe, {\it Ann. Fac.
Sci. Univ. Toulouse} {\bf 23} (1931), 183--261.

\bibitem[Mo]{Mo} P. Montel, Leçons sur les familles normales des fonctions analytiques et leurs applications,
Gauthier-Villars, Paris, 1927.

\bibitem[Neh]{Neh} Z. Nehari, Conformal mapping. New York, McGraw-Hill,
1952.

\bibitem[Nit]{Nit} J.C.C. Nitsche, Über die isolierten Singularitäen der Lösungen von
$\Delta u = e^u$, {\it Math. Z.} {\bf 68} (1957), 316--324.


\bibitem[Ou]{Ou} B. Ou, A uniqueness theorem for harmonic functions on the upper half plane, {\it Conformal Geometry and Dynamics} {\bf 4} (2000), 120--125.

\bibitem[War]{War} G. Warnecke, Über die Darstellung von Lösungen der partiellen
Differentialgleichung $(1 + \delta z\bar{z})^2w_{z\bar{z}} =
\delta-\varepsilon  e^{2w}$ {\it Bonner Math. Schr.} {\bf 34} (1968).


\bibitem[Zha]{Zha} L. Zhang, Classification of conformal metrics on $\R_+^2$ with constant Gauss curvature and geodesic curvature on the boundary under various integral finiteness assumptions, {\it Calc. Var. Partial Diff. Equations} {\bf 16} (2003), 405--430.

\end{thebibliography}
\end{document}